\newtheorem{Proposition}{Proposition}
\newtheorem{Theorem}{Theorem}
\newtheorem{Claim}{Claim}
\newtheorem{Lemma}{Lemma}
\newtheorem{Problem}{Problem}
 \title{Steiner connectivity problems in hypergraphs}
\author{Florian H\"orsch, Zolt\'an Szigeti}
\begin{document}
\maketitle
\begin{abstract}
We say that a tree $T$ is an $S$-Steiner tree if $S \subseteq V(T)$ and a hypergraph is an $S$-Steiner hypertree if it can be trimmed to an $S$-Steiner tree. We prove that it is NP-complete to decide, given a hypergraph $\mathcal{H}$ and some $S \subseteq V(\mathcal{H})$, whether there is a subhypergraph of $\mathcal{H}$ which is an $S$-Steiner hypertree. As corollaries, we give two negative results for two Steiner orientation problems in hypergraphs. Firstly, we show that it is NP-complete to decide, given a hypergraph $\mathcal{H}$, some $r \in V(\mathcal{H})$ and some $S \subseteq V(\mathcal{H})$, whether this hypergraph has an orientation in which every vertex of $S$ is reachable from $r$. Secondly, we show that it is NP-complete to decide, given a hypergraph $\mathcal{H}$ and some $S \subseteq V(\mathcal{H})$, whether this hypergraph has an orientation in which any two vertices in $S$ are mutually reachable from each other. This answers a longstanding open question of the Egerv\'ary Research group. We further show that it is NP-complete to decide if a given hypergraph has a well-balanced orientation.  On the positive side, we show that the problem of finding a Steiner hypertree and the first orientation problem can be solved in polynomial time if the number of terminals $|S|$ is fixed.
\end{abstract}
\section{Introduction}

This article is concerned with Steiner tree problems in hypergraphs and Steiner connectivity orientation problems in hypergraphs. Any undefined notation can be found in Section \ref{prel}.

The first part of the article deals with finding Steiner hypertrees in hypergraphs. There exists a rich literature on Steiner tree problems in graphs. For example, the problem of finding a Steiner tree minimizing a given weight function on the edges has been studied to a significant depth. It is well-known to be NP-complete \cite{karp}, several approximation results are known (\cite{KMB},\cite{BGRS}) and the problem is known to be fixed parameter tractable when parameterized by the number of terminals \cite{DW}. Another branch of research is concerned with the problem of packing Steiner trees. In particular, a famous conjecture of Kriesell \cite{K} remains open but several partial results are known (\cite{FKM},\cite{WW}). The corresponding algorithmic problem has been proven to be NP-complete by Kaski \cite{kaski}. On the other hand, we can trivially decide in polynomial time whether a given graph $G$ contains a single $S$-Steiner tree for some given $S \subseteq V(G)$. We here show that this situation drastically changes when considering hypergraphs. When $S=V(\mathcal{H})$ for a hypergraph $\mathcal{H}$, the problem can be solved in polynomial time using the concept of hypergraphic matroids which was introduced by Lor\'ea \cite{L} and exploited by Frank, Kir\'aly and Kriesell \cite{FKM}. Dealing with Steiner hypertrees in hypergraphs, we formally consider the following problem:
\medskip

\noindent \textbf{Steiner Hypertree (SHT):}
\smallskip

\noindent\textbf{Input:} A hypergraph $\mathcal{H}$, a set $S \subseteq V(\mathcal{H})$.
\smallskip

\noindent\textbf{Question:} Does $\mathcal{H}$ contain an $S$-Steiner hypertree?
\medskip

On the negative side, we show the following result:

\begin{Theorem}\label{treehard}
SHT is NP-complete.
\end{Theorem}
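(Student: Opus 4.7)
The plan is a standard two-step approach: verify that SHT is in NP, then reduce a known NP-complete problem to it. Membership in NP is routine: a yes-certificate consists of a subhypergraph $\mathcal{H}'\subseteq\mathcal{H}$ together with, for each $e\in E(\mathcal{H}')$, a distinguished pair $\pi(e)\in\binom{e}{2}$, and it is easy to check that the pairs $\{\pi(e):e\in E(\mathcal{H}')\}$ form a tree whose vertex set contains $S$.

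For NP-hardness, I would reduce from \textsc{Exact Cover by 3-Sets} (X3C). Given $(X,\mathcal{C})$ with $|X|=3q$ and $\mathcal{C}\subseteq\binom{X}{3}$, I build $(\mathcal{H},S)$ as follows. Introduce a global root vertex $r$, and for each $C=\{a,b,c\}\in\mathcal{C}$ introduce a small bundle of ``selector'' auxiliary vertices together with several hyperedges spanning $r$, the selectors, and the three $X$-vertices $a,b,c$. The gadget is designed so that in any $S$-Steiner hypertree, the $C$-gadget operates in one of two regimes: an \emph{active} regime, in which enough hyperedges are used to cover $\{a,b,c\}$ together with the $C$-selectors, or an \emph{idle} regime, in which fewer hyperedges suffice to cover only the $C$-selectors. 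I take $S$ to consist of $X\cup\{r\}$ together with every selector vertex, and calibrate the gadget sizes so that the hyperedge-budget $|S|-1$ is met exactly when precisely $q$ gadgets are active.

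For the forward direction, given an exact cover $\mathcal{C}^*\subseteq\mathcal{C}$, I activate the gadgets in $\mathcal{C}^*$, idle the rest, and glue the local subtrees at $r$; the trimming is immediate. For the reverse direction, I root any $S$-Steiner hypertree at $r$: each non-root vertex has a unique parent-edge realised by a distinct hyperedge, and this assignment distributes the hyperedges among the gadgets. The budget together with the mandatory-selector constraints then forces every $x\in X$ to be covered by a unique active gadget, which must therefore cover all three of its $X$-elements; hence the set of active gadgets is an exact cover.

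The main obstacle is the gadget design: the gadget must be tight enough that partial activation is strictly infeasible. A naive gadget (for instance, several parallel copies of a single hyperedge $\{r\}\cup C$) allows one to cover a proper subset of $C$ by using a proper subset of the copies, which breaks the exact-cover correspondence. The remedy is to attach mandatory selector terminals to each gadget so that covering even a single element of $C$ via the gadget commits one to using enough hyperedges to cover all of $C$; tuning the numerics of this tradeoff, so that only $q$ active gadgets fit within the budget, is the delicate bookkeeping step on which the proof hinges.
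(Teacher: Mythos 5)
Your NP-membership argument is fine, but the hardness reduction has a structural gap that I do not think can be repaired along the lines you describe. Your entire correspondence with X3C rests on a hyperedge budget of $|S|-1$ being ``met exactly when precisely $q$ gadgets are active.'' However, SHT imposes no bound whatsoever on the number of hyperedges in the Steiner hypertree: the tree $T$ to which the subhypergraph is trimmed only needs to \emph{contain} $S$, so it may use arbitrarily many non-terminal Steiner vertices and hence arbitrarily many hyperedges beyond $|S|-1$. A non-exact cover could then be ``padded'' with extra hyperedges and still yield a valid $S$-Steiner hypertree, breaking the reverse direction. The only way to force $|E(T)|=|S|-1$ is to make every vertex of $\mathcal{H}$ a terminal (which your description of $S$ as the root, the $X$-vertices and all selectors appears to do) --- but then $S=V(\mathcal{H})$, and that special case of SHT is solvable in polynomial time via Lor\'ea's hypergraphic matroid, so no NP-hardness reduction can target it. On top of this, the central gadget is never actually constructed; you explicitly defer ``the delicate bookkeeping step on which the proof hinges,'' so the proof is incomplete even on its own terms.

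The actual proof avoids counting entirely and instead exploits the \emph{exclusivity of trimming}: it reduces from 3SAT, with a hyperedge $e_x=\{a,w_x,w_{\bar x}\}$ for each variable (which can be trimmed to only one of $aw_x$, $aw_{\bar x}$, encoding the truth value) and a hyperedge $e_C=\{w_\ell:\ell\in C\}\cup\{z_C\}$ for each clause, with terminals $S=\{z_C:C\in\mathcal{C}\}\cup\{a\}$. Since each $z_C$ has degree $1$ in any trimming, the $z_C$--$a$ path in the tree must have the form $a\,w_\ell\,z_C$ for some $\ell\in C$, which forces the variable hyperedge to be trimmed to $aw_\ell$, i.e.\ $\ell$ to be true. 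If you want to salvage your approach, you need a mechanism of this kind --- one choice per hyperedge excluding the others --- rather than a global edge-count argument.
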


On the positive side, we are able to show that the problem can be solved in polynomial time if the number of terminals is fixed.

\begin{Theorem}\label{sfix}
There is a function $f:\mathbb{Z}_{\geq 0}\rightarrow \mathbb{Z}_{\geq 0}$ and an algorithm that solves SHT and runs in $O(f(|S|)n^{|S|}m^2)$.
\end{Theorem}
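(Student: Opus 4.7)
The plan is a ``guess-and-verify'' algorithm. First I would observe that if $\mathcal{H}$ admits an $S$-Steiner hypertree, it admits an inclusion-minimal one $\mathcal{H}^{*}$ whose trimmed tree $T^{*}$ has every leaf in $S$; otherwise the hyperedge producing a non-$S$ leaf could be discarded, contradicting minimality. Hence $T^{*}$ has at most $|S|$ leaves and at most $|S|-2$ branching vertices of degree $\geq 3$. Writing $W^{*}$ for the union of $S$ and the branching Steiner vertices of $T^{*}$, we have $|W^{*}|\leq 2|S|-2$, and $T^{*}$ decomposes along $W^{*}$ into at most $2|S|-3$ ``segments'' whose non-$W^{*}$ vertices are all Steiner vertices of degree exactly $2$.

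Guided by this, the algorithm enumerates an abstract topology $\tau$ (a tree on at most $2|S|-2$ nodes, with $|S|$ leaves labelled by the elements of $S$ and every other node of degree $\geq 3$) together with an injection $\psi$ from the non-terminal nodes of $\tau$ into $V(\mathcal{H})\setminus S$. The number of topologies is bounded by some function $f(|S|)$ of $|S|$ alone, and for each topology there are at most $n^{|S|-2}$ such injections, which accounts for the $f(|S|)\cdot n^{|S|}$ factor in the runtime. For each enumerated pair $(\tau,\psi)$, the algorithm must then decide in time $O(m^{2})$ whether $\mathcal{H}$ contains an $S$-Steiner hypertree whose trimmed tree, after suppressing its degree-$2$ Steiner vertices, is isomorphic to $\tau$ via $\psi$.

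This realizability test is the heart of the proof and the main obstacle. It is a constrained hyperedge-disjoint routing problem: for each edge $ab\in E(\tau)$, I must find a hyperpath $P_{ab}$ in $\mathcal{H}$ from $\psi(a)$ to $\psi(b)$ internally avoiding $\psi(V(\tau))\setminus\{\psi(a),\psi(b)\}$, with the $P_{ab}$ pairwise hyperedge-disjoint and internally vertex-disjoint. My approach is to recast the routing as a matroid problem on the hypergraphic matroid of Lor\'ea, in the spirit of \cite{FKM}: for each hyperedge of $\mathcal{H}$ one first restricts the set of edges of $\tau$ it is allowed to serve (by looking at which of the $\psi$-images it meets), and then seeks a subset of hyperedges which is independent in the hypergraphic matroid and whose trimming is consistent with $\tau$. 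The delicate step I expect to require the most care is reconciling the ``trims to a forest'' condition supplied by the hypergraphic matroid with the ``trims to a tree realizing $\tau$'' condition; this likely requires, on top of a matroid-intersection computation, an explicit verification that the various segments piece together into a connected hypertree, together with a careful accounting to keep the per-configuration cost at $O(m^{2})$.
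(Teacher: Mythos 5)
Your enumeration scheme is essentially the one the paper uses: reduce to ``small'' topologies (trees on at most $2|S|-2$ vertices with all leaves in $S$ and all non-terminal branch vertices of degree at least $3$, bounded in number via Cayley's formula) and enumerate the at most $n^{|S|-2}$ placements of the branch vertices. The gap is in the realizability test, which you correctly identify as the heart of the proof but do not actually solve. The test you need is: given the placed topology, find pairwise internally vertex-disjoint (and hyperedge-disjoint) hyperpaths realizing the edges of $\tau$. This is, in essence, an instance of the $k$-disjoint paths problem with $k\leq 2|S|-3$ terminal pairs, and that problem is not known to be expressible as a matroid intersection; even for two pairs in ordinary graphs it falls outside matroid techniques, and for general fixed $k$ the only known polynomial algorithms go through the graph-minors machinery of Robertson--Seymour (in the quadratic-time form of Kawarabayashi, Kobayashi and Reed). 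Your proposed route through Lor\'ea's hypergraphic matroid cannot work as sketched: independence in that matroid only certifies ``trims to a forest'' and says nothing about which vertices the individual segments pass through, so it cannot enforce either the internal vertex-disjointness of the segments or the requirement that they connect the prescribed endpoint pairs rather than merely forming some acyclic trimming. (Indeed, since SHT is NP-complete in general, no terminal-oblivious matroid condition can capture the problem.)

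The paper's fix is to transfer the routing problem to the incidence graph $G(\mathcal{H})$: an $S$-Steiner hypertree corresponds exactly to an $S$-Steiner tree of $G(\mathcal{H})$ in which every hyperedge-vertex $z_e$ has degree $2$, and the realizability of a placed topology becomes precisely a $k$-disjoint paths instance in $G(\mathcal{H})$, solvable in $O(f'(k)(n+m)^2)$ time by the Kawarabayashi--Kobayashi--Reed algorithm. Note that vertex-disjointness in $G(\mathcal{H})$ automatically yields the hyperedge-disjointness you were worrying about, since two segments sharing a hyperedge would share the vertex $z_e$. To repair your proof, replace the matroid step by this reduction (or some other concrete solution of the constrained routing problem); as written, the argument is incomplete at its central step.
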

In the second part of this article, we apply these results to orientation problems in hypergraphs. We first deal with rooted connectivity. Formally, we consider the following problem:
\medskip

\noindent \textbf{Steiner Rooted Connected Orientation of Hypergraphs (SRCOH):}
\smallskip

\noindent\textbf{Input:} A hypergraph $\mathcal{H}$, a vertex $r \in V(\mathcal{H})$, a set $S \subseteq V(\mathcal{H})$.
\smallskip

\noindent\textbf{Question:} Is there an orientation $\vec{\mathcal{H}}$ of $\mathcal{H}$ that is $(r,S)$-Steiner rooted connected?
\medskip

It turns out that SHT and SRCOH are closely related. In particular, SRCOH can be solved in polynomial time when restricted to graphs. On the other hand, using the above mentioned relation, we prove the following result showing that such an algorithm is unlikely to exist for general hypergraphs.  

\begin{Theorem}\label{roothard}
SRCOH is NP-complete.
\end{Theorem}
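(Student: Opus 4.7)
Membership in NP is immediate: an orientation $\vec{\mathcal{H}}$ is a polynomial-sized certificate, and reachability of every vertex of $S$ from $r$ in $\vec{\mathcal{H}}$ can be verified in polynomial time. For NP-hardness, the plan is to reduce from SHT, which is NP-complete by Theorem~\ref{treehard}. Given an SHT instance $(\mathcal{H},S)$---we may assume $|S|\geq 2$, since otherwise the answer is trivially yes---I would pick an arbitrary $r\in S$ and output the SRCOH instance $(\mathcal{H},r,S\setminus\{r\})$. The reduction is clearly polynomial, so the whole task reduces to showing the equivalence: $\mathcal{H}$ contains an $S$-Steiner hypertree if and only if $\mathcal{H}$ admits an orientation under which every vertex of $S\setminus\{r\}$ is reachable from $r$.

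For the forward direction, given a subhypergraph $\mathcal{H}'\subseteq\mathcal{H}$ that trims to an $S$-Steiner tree $T$, I would root $T$ at $r$ and orient its edges away from $r$. Each hyperedge $e$ of $\mathcal{H}'$ then naturally inherits a head, namely the endpoint of its trimmed image that is farther from $r$ in $T$, while hyperedges outside $\mathcal{H}'$ are oriented arbitrarily. The unique $r$-to-$s$ path of $T$ then witnesses reachability of $s$ from $r$ in $\vec{\mathcal{H}}$ for every $s\in S\setminus\{r\}$.

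For the converse, given an orientation $\vec{\mathcal{H}}$ in which every vertex of $S\setminus\{r\}$ is reachable from $r$, I would extract an out-arborescence $A$ rooted at $r$ spanning $S\setminus\{r\}$ (for instance, from a shortest-hyperpath tree) and let $\mathcal{H}'$ be the subhypergraph of $\mathcal{H}$ consisting of the hyperedges used as arcs of $A$. The main subtle point is that $\mathcal{H}'$ indeed trims to an $S$-Steiner tree: since each hyperedge has a unique head in $\vec{\mathcal{H}}$ and distinct arcs of $A$ have distinct heads, the hyperedges used in $A$ are pairwise distinct, so trimming each of them to the corresponding arc reproduces the underlying tree of $A$, which spans $S$ and is therefore an $S$-Steiner tree. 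This head-uniqueness argument is the only place where the proof really uses the hypergraph orientation convention, and it is the step I would state most carefully.
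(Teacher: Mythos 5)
Your proposal is correct and matches the paper's approach: the paper also reduces from SHT via the equivalence (its Lemma~\ref{equi}) between $\mathcal{H}$ having an $(r,S)$-Steiner rooted connected orientation and $\mathcal{H}$ containing an $(S\cup r)$-Steiner hypertree. The only difference is presentational: where you argue head-uniqueness and arborescence extraction by hand, the paper delegates that step to Proposition~\ref{dirtrim} (quoted from Frank, Kir\'aly and Kriesell), but the substance is the same.
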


Again exploiting this relation, we can conclude the following result from Theorem \ref{sfix}.

\begin{Theorem}\label{sfix2}
There is a function $f:\mathbb{Z}_{\geq 0}\rightarrow \mathbb{Z}_{\geq 0}$ and an algorithm that solves SRCOH and runs in $O(f(|S|)n^{|S|+1}m^2)$.
\end{Theorem}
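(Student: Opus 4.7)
The plan is to reduce SRCOH to SHT and then apply Theorem \ref{sfix}. Given an instance $(\mathcal{H}, r, S)$ of SRCOH, I would simply set $S' = S \cup \{r\}$ and run the algorithm of Theorem \ref{sfix} on the SHT-instance $(\mathcal{H}, S')$; the claim to verify is that these two instances have the same answer.

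For the forward direction, suppose $\mathcal{H}$ admits an $(r,S)$-Steiner rooted connected orientation $\vec{\mathcal{H}}$. A standard breadth-first-search argument in $\vec{\mathcal{H}}$ yields an out-arborescence rooted at $r$ (in the hypergraph sense, where every non-root vertex has a unique in-hyperedge) whose vertex set contains $S'$; pruning hyperedges that do not lie on any $r$-to-$S$ dipath leaves a subhypergraph whose underlying hypergraph is an $S'$-Steiner hypertree in $\mathcal{H}$. Conversely, suppose $\mathcal{H}$ contains a subhypergraph $\mathcal{H}'$ that is an $S'$-Steiner hypertree. By definition $\mathcal{H}'$ can be trimmed to an $S'$-Steiner tree $T$: for every $e \in E(\mathcal{H}')$ a pair $\{u_e,v_e\} \subseteq e$ is chosen so that these pairs form $T$. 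Orient $T$ as an out-arborescence from $r$, and for each hyperedge $e \in E(\mathcal{H}')$ orient $e$ so that the vertex among $\{u_e,v_e\}$ that is the child in this arborescence becomes the head of $e$. Every vertex of $S'$ is then reachable from $r$ in the resulting orientation of $\mathcal{H}'$, and extending by an arbitrary orientation of the remaining hyperedges of $\mathcal{H}$ produces the desired $(r,S)$-Steiner rooted connected orientation.

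Once this equivalence is in hand, applying Theorem \ref{sfix} to $(\mathcal{H}, S')$ with $|S'| \leq |S|+1$ immediately yields a running time of $O(f(|S|+1) n^{|S|+1} m^2)$; redefining $f$ by a shift of one absorbs this into the claimed bound.

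The main point of care is in the converse direction, where one must respect the hypergraph orientation convention: each hyperedge has a single designated head with the remaining vertices as tails, and reachability proceeds by entering a hyperedge at a tail and leaving through its head. Choosing the head of $e$ to be the child endpoint of the trimmed pair inside $T$ is precisely what lifts the arc structure of $T$ to a reachability structure in $\vec{\mathcal{H}'}$, so that $r$ reaches all of $S$. Beyond this, the reduction is entirely syntactic and no genuinely new algorithmic ideas are needed.
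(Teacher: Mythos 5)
Your proposal is correct and follows essentially the same route as the paper: the paper isolates the equivalence between SRCOH on $(\mathcal{H},r,S)$ and SHT on $(\mathcal{H},S\cup r)$ as Lemma \ref{equi} and then invokes Theorem \ref{sfix}, exactly as you do. The only cosmetic difference is that for the direction from an orientation to a hypertree the paper cites Proposition \ref{dirtrim} instead of your breadth-first-search/pruning argument, which proves the same fact.
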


We also deal with a more symmetric connectivity problem in orientations of hypergraphs. For graphs, a fundamental result of Nash-Williams \cite{NW} states that for any positive integer $k$, a graph has a $k$-arc-connected orientation if and only if it is $2k$-edge-connected. Actually, Nash-Williams proved  the even stronger result that every graph has a well-balanced orientation. In particular, this yields a complete characterization of the cases when a graph $G$ has an orientation $\vec{G}$ satisfying $\lambda_{\vec{G}}(u,v)\geq r(u,v)$ for some arbitrary symmetric requirement function $r:V(G) \times V(G) \rightarrow \mathbb{Z}_{\geq 0}$. For the case of global dyperedge-connectivity in hypergraphs, a characterization of the positive instances  has been proven with by Frank, Kir\'aly and Kir\'aly \cite{FKK}. 
 Their article does not provide a polynomial time algorithm to find the orientation in question if it exists. Such an  algorithm  by M\"uhlenthaler, Peyrille and Szigeti \cite{MPSz} is in preparation. 
The Egerv\'ary Research group \cite{egreswell} raised the question whether these approaches can be combined in order to find orientations satisfying local symmetric dyperedge-connectivity requirements of hypergraphs. We answer this question to the negative even for the very special case when the requirement function $r$ evaluates to 1 when both arguments belong to a fixed set of vertices and 0 otherwise. Formally, we consider the following problem:
\medskip

\noindent \textbf{Steiner Strongly Connected Orientation of Hypergraphs (SSCOH):}
\smallskip

\noindent\textbf{Input:} A hypergraph $\mathcal{H}$, a set $S \subseteq V(\mathcal{H})$.
\smallskip

\noindent\textbf{Question:} Is there an orientation $\vec{\mathcal{H}}$ of $\mathcal{H}$ that is strongly connected in $S$?
\medskip

As a rather simple consequence of Theorem \ref{roothard}, we are able to prove the following:

\begin{Theorem}\label{stronghard}
SSCOH is NP-complete.
\end{Theorem}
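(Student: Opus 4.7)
The plan has two parts: showing SSCOH is in NP, and reducing SRCOH---NP-complete by Theorem \ref{roothard}---to it. Membership in NP is immediate: an orientation of $\mathcal{H}$ is a polynomial-size certificate, and strong connectivity on $S$ can be verified in polynomial time via $|S|$ reachability computations. For the hardness, given an SRCOH instance $(\mathcal{H}, r, S)$, I would construct an SSCOH instance $(\mathcal{H}', S')$ by attaching to each terminal $v \in S$ a small ``return gadget'' $G_v$ whose role is to supply, in any valid orientation of $\mathcal{H}'$ that is strongly connected on $S'$, a directed walk from $v$ to $r$, while providing no alternative route from $r$ to $v$. Setting $S' = S \cup \{r\} \cup \bigcup_v V(G_v)$ (with $V(G_v)$ the new terminal vertices of gadget $G_v$) keeps the reduction polynomial.

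The forward direction (SRCOH yes implies SSCOH yes) is direct: an $(r,S)$-Steiner rooted connected orientation of $\mathcal{H}$ extends to a strongly connected orientation of $\mathcal{H}'$ on $S'$ by orienting each $G_v$ in its ``forward'' configuration, so that the cycle $r \to v \to r$ closes using $\mathcal{H}$ in one direction and the gadget in the other.

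The hardest part will be the backward direction, together with the gadget design. The required property is rigidity: every orientation of $\mathcal{H}'$ that is strongly connected on $S'$ should force each gadget into its forward configuration, so that $r$-to-$v$ reachability in $\vec{\mathcal{H}'}$ can only be established using arcs of $\mathcal{H}$ itself. The most naive candidate---a single new vertex $z_v$ with graph edges $\{v, z_v\}$ and $\{z_v, r\}$, with $z_v$ added to $S'$---unfortunately fails, since the symmetric reverse orientation $r \to z_v \to v$ is also compatible with strong connectivity; small examples (for instance, $\mathcal{H}$ consisting of two disconnected pieces, only one of which contains $r$) witness that this naive reduction can yield SSCOH yes even when SRCOH is no. I would therefore use a more constrained gadget---for instance augmenting the above with a $3$-hyperedge on $\{v, z_v, r\}$ or additional new terminal vertices whose own strong-connectivity requirements break the reverse symmetry---and establish rigidity via a case analysis of the admissible tail choices compatible with strong connectivity on $S'$. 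With rigidity in hand, the restriction to $\mathcal{H}$ of any strongly connected orientation of $\mathcal{H}'$ on $S'$ is an $(r,S)$-Steiner rooted connected orientation of $\mathcal{H}$, closing the equivalence.
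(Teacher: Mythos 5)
Your overall strategy---reduce from SRCOH by adding structure that returns the terminals to $r$---is the same as the paper's, and your NP-membership argument and forward direction are fine. But the proof has a genuine gap exactly where you say the real work lies: you never exhibit a gadget $G_v$ with the claimed rigidity property, and you correctly observe that the naive candidates fail by symmetry. That observation is not an isolated nuisance; it is the core obstruction. Any return gadget made of undirected hyperedges attached at $v$ and $r$ admits, a priori, orientations in which it is traversed from $r$ towards $v$, and there is no reason such an orientation cannot be strongly connected on $S'$ while $\mathcal{H}$ itself fails to be $(r,S)$-rooted connectable. Gesturing at ``a more constrained gadget'' and ``a case analysis'' does not close this; as written, the backward direction is unproven.

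The paper resolves this without any rigidity at all. It adds a single hyperedge $e^*=S\cup\{r\}$ and sets $S'=S\cup\{r\}$; in the backward direction it does not argue that every good orientation has $e^*$ pointing to $r$, but rather that one may \emph{re-orient} $e^*$ to have head $r$ without changing any local connectivity. This is Proposition \ref{propch}: if the head of a dyperedge can reach some tail vertex $x$, then the bipartite digraph $D(\vec{\mathcal{H}})$ contains a circuit through the corresponding arcs, and reversing that circuit yields an orientation with the same $\lambda$ values in which $x$ is the new head. Since the orientation is strongly connected on $S'$ and all vertices of $e^*$ lie in $S'$, the hypothesis of the proposition is automatic. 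Once $e^*$ points to $r$, no directed $r$-to-$s$ path can use it, so deleting it leaves an $(r,S)$-Steiner rooted connected orientation of $\mathcal{H}$. Replacing your unproven rigidity claim by this ``reverse a circuit, preserve connectivities, assume WLOG'' step is the missing idea; without it, your reduction is not established.
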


Finally, we deal with the problem of finding well-balanced orientations of hypergraphs. Recall that a celebrated theorem of Nash-Williams \cite{NW} states that every graph has a well-balanced orientation. It is easy to see that this result cannot be generalized to hypergraphs, but the complexity of deciding whether a given hypergraph has a well-balanced orientation is an open problem that was hinted at in \cite{egreswell} and asked explicitely in \cite{H}. Formally, we consider the following problem:
\medskip

\noindent \textbf{Well-balanced Orientation of Hypergraphs (WBOH):}
\smallskip

\noindent\textbf{Input:} A hypergraph $\mathcal{H}$.
\smallskip

\noindent\textbf{Question:} Is there a well-ballanced orientation $\vec{\mathcal{H}}$ of $\mathcal{H}$?
\medskip

We prove the following result that shows that a characterization of hypergraphs admitting a well-balanced orientation is unlikely to be found.

\begin{Theorem}\label{wellhard}
WBOH is NP-complete.
\end{Theorem}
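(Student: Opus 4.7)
The plan is to reduce from SSCOH, proved NP-complete in Theorem~\ref{stronghard}. From an instance $(\mathcal{H},S)$ of SSCOH I would construct in polynomial time a hypergraph $\mathcal{H}^*$ such that $\mathcal{H}^*$ admits a well-balanced orientation if and only if $\mathcal{H}$ admits an orientation strongly connected in $S$. Since WBOH is clearly in NP, this suffices to establish NP-completeness.

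The construction is to augment $\mathcal{H}$ with a small gadget that raises $\lambda_{\mathcal{H}^*}(u,v)$ to at least $2$ on the terminal set, while essentially not disturbing the connectivity values elsewhere, so that the well-balanced inequality on these pairs reduces to a requirement of strong directed connectivity. Concretely, I would adjoin one new vertex $r$ together with two parallel copies of the $2$-edge $\{r,s\}$ for each $s\in S$. One then checks that $\lambda_{\mathcal{H}^*}(u,v)\geq 2$ for all $u,v\in S\cup\{r\}$, while for pairs $u,v\in V(\mathcal{H})\setminus S$ the value of $\lambda_{\mathcal{H}^*}(u,v)$ is governed by $\lambda_{\mathcal{H}}(u,v)$.

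For the easy direction, an orientation of $\mathcal{H}$ strongly connected in $S$ is extended by orienting the two copies of each $\{r,s\}$ asymmetrically, one from $r$ to $s$ and the other from $s$ to $r$; this immediately yields $\lambda_{\vec{\mathcal{H}^*}}(u,v)\geq 1$ on all of $S\cup\{r\}$. Conversely, from a well-balanced orientation of $\mathcal{H}^*$ I would recover an orientation of $\mathcal{H}$ strongly connected in $S$, the key point being that any dipath in $\vec{\mathcal{H}^*}$ between two vertices of $S$ that passes through $r$ can be replaced by one contained in $\mathcal{H}$, since $r$ is incident only with gadget edges and the doubled gadget at $r$ provides symmetric routing options.

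The main obstacle will be the well-balanced inequality on pairs $(u,v)$ outside $S\cup\{r\}$, since for these pairs $\lambda_{\mathcal{H}^*}(u,v)$ is controlled by $\lambda_{\mathcal{H}}(u,v)$ and the required bound on $\lambda_{\vec{\mathcal{H}^*}}(u,v)$ is not implied by the SSCOH hypothesis. I plan to overcome this by specializing the reduction to SSCOH instances $(\mathcal{H},S)$ enjoying the additional property that $\lambda_{\mathcal{H}}(u,v)\leq 1$ whenever $\{u,v\}\not\subseteq S$, so that all such well-balanced constraints become vacuous; inspection of the reduction chain leading from SHT to SSCOH in Theorems~\ref{treehard}--\ref{stronghard} suggests that this structural restriction can be enforced, since the hardness ultimately flows from SHT on hypergraphs that are sparse outside the terminal set. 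Should this be unavailable in the required form, an alternative is to further modify $\mathcal{H}^*$ so that the hyperedges of $\mathcal{H}$ interact with the rest of the hypergraph only through $S$, again trivializing the well-balanced constraints on non-terminal pairs and pushing the reduction through.
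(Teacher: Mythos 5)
Your reduction has a fatal gap in the backward direction. The claim that ``any dipath in $\vec{\mathcal{H}^*}$ between two vertices of $S$ that passes through $r$ can be replaced by one contained in $\mathcal{H}$'' is false, and without it the reduction collapses: orienting one copy of each pair $\{r,s\}$ from $r$ to $s$ and the other from $s$ to $r$ makes all of $S\cup\{r\}$ mutually reachable \emph{through the gadget alone}, with no constraint whatsoever on how the hyperedges of $\mathcal{H}$ are oriented. Concretely, take $\mathcal{H}$ to be a single edge $s_1s_2$ with $S=\{s_1,s_2\}$; then $\mathcal{H}$ has no orientation strongly connected in $S$, yet $\mathcal{H}^*$ (which is $s_1s_2$ plus two parallel edges from $r$ to each $s_i$) has a well-balanced orientation, since $\lambda_{\mathcal{H}^*}(u,v)\leq 3$ for every pair and the star orientation at $r$ already certifies $\lambda_{\vec{\mathcal{H}^*}}(u,v)\geq 1$ everywhere. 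The underlying difficulty is structural: well-balancedness only demands $\lfloor\lambda/2\rfloor$, so any gadget you add to raise $\lambda_{\mathcal{H}^*}(u,v)$ to $2$ simultaneously supplies enough capacity to meet the resulting requirement on its own. Neither of your proposed patches (restricting to SSCOH instances with $\lambda_{\mathcal{H}}(u,v)\leq 1$ off $S$, or insulating $\mathcal{H}$ behind $S$) addresses this, and your forward direction is also incomplete for pairs $u,v\in S$ with $\lambda_{\mathcal{H}}(u,v)\geq 2$, where $\lambda_{\mathcal{H}^*}(u,v)\geq 4$ forces two dyperedge-disjoint directed paths that strong connectivity in $S$ does not provide.

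For comparison, the paper does not route through SSCOH at all: it reduces directly from $(3,B2)$-SAT with a gadget in which a distinguished pair $(a,z_C)$ has $\lambda_{\mathcal{H}}(a,z_C)=8$ exactly, realized by eight hyperedge-disjoint hyperpaths. Well-balancedness then forces four dyperedge-disjoint paths in \emph{each} direction; four parallel copies of a hyperedge $Z\cup a$ are necessarily consumed by the $z_C\rightarrow a$ direction (after an exchange argument via Proposition~\ref{propch}), so the remaining four paths from $a$ to $z_C$ must cross a carefully chosen cut $Y$ that only a true literal's hyperedge can enter. This is the mechanism your construction is missing: the added capacity must be pinned down in one direction so that the other direction is forced to certify the SAT structure.
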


The reduction proving Theorem \ref{wellhard} is similar to the ones proving Theorems \ref{treehard} and \ref{stronghard}, but slightly more involved.

In Section \ref{prel}, we provide some more formal definitions and preliminary results we need for our proofs. In Section \ref{redu}, we give the reductions proving Theorems \ref{treehard},\ref{roothard} and \ref{stronghard} and we prove Theorems \ref{sfix} and \ref{sfix2}.
\section{Preliminaries}\label{prel}
In this section, notation and some auxiliary results are collected. In Section \ref{def}, we give the necessary definitions and in Section \ref{res}, we give the preliminary results.
\subsection{Definitions}\label{def}
A {\it hypergraph} $\mathcal{H}$ consists of a vertex set $V(\mathcal{H})$ and a hyperedge set $\mathcal{E}(\mathcal{H})$ where each $e \in \mathcal{E}(\mathcal{H})$ is a subset of $V(\mathcal{H})$ of size at least 2. Throughout the article, we use $n$ and $m$ for the number of vertices and hyperedges of $\mathcal{H}$, respectively. If a hyperedge contains exactly two vertices $u$ and $v$, we call it an {\it edge} and write $uv$ instead of $\{u,v\}$. If each hyperedge in $\mathcal{E}(\mathcal{H})$ is an edge, we call $\mathcal{H}$ a graph. We say that a graph $G$ is a {\it trimming} of $\mathcal{H}$ if  $G$ is obtained from $\mathcal{H}$ by replacing every $e \in \mathcal{E}(\mathcal{H})$ by an edge containing two distinct vertices of $e$. For some  $X \subseteq V(\mathcal{H})$, we use $d_{\mathcal{H}}(X)$ for the number of hyperedges in $\mathcal{E}(\mathcal{H})$ that contain at least one vertex in $X$ and at least one vertex in $V(\mathcal{H})-X$. For a single vertex $v \in V(\mathcal{H})$, we use $d_{\mathcal{H}}(v)$ instead of $d_{\mathcal{H}}(\{v\})$ and call this value the {\it degree} of $v$ in ${\mathcal{H}}$. For $u,v \in V(\mathcal{H})$, we let $\lambda_{\mathcal{H}}(u,v)=\min\{d_\mathcal{H}(X):u \subseteq X \subseteq V(\mathcal{H})-v\}.$ For a non-negative integer $k$, a graph $G$ is called {\it $k$-edge-connected} if $d_G(X)\ge k$ for every nonempty $X\subsetneq V(G).$ A {\it tree} is an edge-minimal $1$-edge-connected graph.  For a positive integer $n$, the {\it number of labelled trees on $n$ vertices} refers to the number of trees whose vertex set is a fixed set $X$ of size $n$ where two trees $T_1,T_2$ are considered distinct if there is a pair of vertices $x_1,x_2 \in X$ such that exactly one of $E(T_1)$ and $E(T_2)$ contains an edge linking $x_1$ and $x_2$, even if $T_1$ and $T_2$ are isomorphic. A {\it $uv$-path} is a tree $P$ in which $d_P(u)=d_P(v)=1$ and $d_P(w)=2$ for all $w \in V(P)-\{u,v\}$ hold.  A {\it path} is a $uv$-path for some $u$ and $v.$ Given a tree $T$ and a {\it terminal set} $S \subseteq V(T)$, we say that $T$ is an {\it $S$-Steiner tree}. Two paths $P_1,P_2$ are called {\it internally vertex-disjoint} if $d_{P_1}(v)+d_{P_2}(v)\leq 2$ for all $v \in V(P_1)\cup V(P_2)$. An $S$-Steiner tree is called {\it small} if $|V(T)|\leq 2|S|-2$. A {\it subdivision} of a graph $G_1$ in a graph $G_2$ is a mapping $\phi:V(G_1)\rightarrow V(G_2)$ together with a collection of paths $\mathcal{P}=\{P_e:e \in E(G_1)\}$ such that for every $e=uv \in E(G_1)$, $P_e$ is a $\phi(u)\phi(v)$-path and the  paths in $\mathcal{P}$ are pairwise internally vertex-disjoint. For a graph $G$ and a vertex $v$ of $G$ which is contained in exactly two edges $uv$ and $vw$ such that $u \neq w$, we mean by {\it splitting off $v$} the operation which consists of deleting the vertex $v$ from $G$ and adding a new edge linking $u$ and $w$.

For a hypergraph $\mathcal{H}$, we denote by $G(\mathcal{H})$ the {\it incidence graph} of $\mathcal{H}$, i.e. the graph which is obtained from $\mathcal{H}$ by replacing every $e \in \mathcal{E}(\mathcal{H})$ by a new vertex $z_e$ and edges $vz_e$ for all $v \in e$. Given a terminal set $S$, a small $S$-Steiner tree $T$ and a hypergraph $\mathcal{H}$ with $S \subseteq V(\mathcal{H})$, a  subdivision $(\phi,\mathcal{P})$ of $T$ in $G(\mathcal{H})$ is called {\it special} if $\phi(s)=s$ for all $s \in S$ and $\phi(v)\in V(\mathcal{H})$ for all $v \in V(T)$.  An {\it $S$-Steiner hypertree} is a hypergraph that can be trimmed to an $S$-Steiner tree.

A {\it dypergraph} $\mathcal{D}$ consists of a vertex set $V(\mathcal{D})$ and a dyperedge set $\mathcal{A}(\mathcal{D})$ where each $a \in \mathcal{A}(\mathcal{D})$ is a tuple $(tail(a),head(a))$ where $head(a)$ is a vertex in $V(\mathcal{D})$ and $tail(a)$ is a nonempty subset of $V(\mathcal{D})-head(a)$.

For some $X \subseteq V(\mathcal{D})$, we say that a dyperedge $a\in\mathcal{A}(\mathcal{D})$ {\it enters} $X$ if $head(a)\in X$ and $tail(a)-X \neq \emptyset$. We denote by $\delta_{\mathcal{D}}^-(X)$ the set of dyperedges in $\mathcal{A}(\mathcal{D})$ that enter $X.$ We use $d_{\mathcal{D}}^-(X)$ for $|\delta_{\mathcal{D}}^-(X)|$.
 For $u,v \in V(\mathcal{D})$, we use $\lambda_{\mathcal{D}}(u,v)$ for $\min\{d_{\mathcal{D}}^-(X):X \subseteq V(\mathcal{D}), v \in X, u \in V(\mathcal{D})-X\}$.
 For some $S \subseteq V(\mathcal{D})$, we say that $\mathcal{D}$ is {\it strongly connected in $S$} if $\lambda_{\mathcal{D}}(u,v)\geq 1$ for every ordered  pair $(u,v)$ in $S$. We say that $\mathcal{D}$ is {\it strongly connected} if $\mathcal{D}$ is strongly connected in $V(\mathcal{D})$. For some $u,v \in V(\mathcal{D})$, we say that $v$ is {\it reachable} from $u$ if $\lambda_{\mathcal{D}}(u,v)\geq 1$. If for some $r\in V(\mathcal{D})$ and $S \subseteq V(\mathcal{D})$, every $v \in S$ is reachable from $r$, we say that $\mathcal{D}$ is {\it $(r,S)$-Steiner rooted connected}.
If a dypergraph $\vec{\mathcal{H}}$ is obtained from a hypergraph $\mathcal{H}$ by choosing a head for each hyperedge, we say that $\vec{\mathcal{H}}$ is an {\it orientation} of $\mathcal{H}$. We say that $\vec{\mathcal{H}}$ is a {\it well-balanced} orientation of $\mathcal{H}$ if $\lambda_{\vec{\mathcal{H}}}(u,v)\geq \lfloor\frac{1}{2}\lambda_{\mathcal{H}}(u,v)\rfloor$ holds for all ordered pairs $(u,v)$  in $V(\mathcal{H})$.
A dypergraph in which the tail of each dyperedge is of size 1 is called a {\it digraph}. The dyperedges of a digraph are called {\it arcs}. For some dypergraph $\mathcal{D}$, we let $D(\mathcal{D})$ denote the digraph in which every dyperedge $a\in \mathcal{A}(\mathcal{D})$ is replaced by a vertex $z_a$, an arc $vz_a$ for all $v \in tail(a)$ and an arc $z_a head(a)$.
The {\it underlying hypergraph} of $\mathcal{D}$ is the hypergraph on the same vertex set and that contains the hyperedge $tail(a)\cup head(a)$ for all $a \in \mathcal{A}(\mathcal{D})$. If $\mathcal{D}$ is a digraph, we speak of the {\it underlying graph.}
 For a non-negative integer $k$, a digraph $D$ is called {\it $k$-arc-connected} if $d_D^-(X)\ge k$ for every nonempty $X\subsetneq V(D).$
We say that a digraph $D$ is a {\it directed trimming} of $\mathcal{D}$ if  $D$ is obtained from $\mathcal{D}$ by replacing every $a \in \mathcal{A}(\mathcal{D})$ by an arc whose head is the head of $a$ and whose tail is a vertex in $tail(a)$. An $r$-arborescence is a digraph $B$ with $r \in V(B)$ and which is arc-minimal with the property that every vertex in $V(B)$ is reachable from $r$. For some $S \subseteq V(B)$, we speak of an {\it $(r,S)$-Steiner arborescence. A {\it directed $uv$-path} is a $u$-arborescence in which $v$ is the only vertex that is not the tail of any arc.}  A {\it circuit} is a strongly connected digraph satisfying $|A(D)|=|V(D)|$.
\medskip
\subsection{Preliminaries}\label{res}

For the reductions in Section \ref{redu}, we consider two variations of the well-known satisfiability problem. For a binary variable $x$, the {\it literals} over $x$ are $x$ and $\bar{x}$, the negation of $x$. The {\it literals over $X$} is the set of all literals over all $x \in X$. Let $\ell$ be a literal over some $x \in X$. Then $\bar{\ell}$ denotes $\bar{x}$ if $\ell=x$ and $x$ if $\ell=\bar{x}$. For an assignment $\phi:X \rightarrow \{TRUE,FALSE\}$, we say that $\phi(\ell)=TRUE$ if $\ell=x$ and $\phi(x)=TRUE$ or $\ell=\bar{x}$ and $\phi(x)=FALSE$, $\phi(\ell)=FALSE$, otherwise.
\medskip

\noindent \textbf{3SAT}
\smallskip

\noindent\textbf{Input:} A set of binary variables $X$, a set of clauses $\mathcal{C}$ each of which contains 3 literals over $X$.
\smallskip

\noindent\textbf{Question:} Is there an assignment $\phi:X \rightarrow \{TRUE,FALSE\}$ such that every clause of $\mathcal{C}$ contains at least one true literal?
\medskip

For the first reduction, we need the following well-known result, see \cite{karp}.

\begin{Theorem}\label{3satdure}
3SAT is NP-complete.
\end{Theorem}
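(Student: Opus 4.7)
The plan is to prove the result in two stages: membership of 3SAT in NP and then NP-hardness via a reduction, following the classical Cook--Levin approach together with a standard clause-splitting trick.

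For membership in NP, I would observe that given a candidate assignment $\phi:X\rightarrow\{TRUE,FALSE\}$, one can verify in linear time in the size of $\mathcal{C}$ whether every clause contains at least one true literal. So 3SAT is in NP.

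For NP-hardness, I would first prove the Cook--Levin theorem, i.e. that general SAT (with clauses of arbitrary size) is NP-hard. The idea is to take an arbitrary language $L\in\text{NP}$ with a nondeterministic Turing machine $M$ deciding it in time $p(n)$ for some polynomial $p$, and, for a given input $w$, construct in polynomial time a Boolean formula $\Phi_{M,w}$ which is satisfiable exactly when $M$ has an accepting computation on $w$. The variables of $\Phi_{M,w}$ encode the contents of each tape cell, the head position, and the control state at each of the $p(|w|)$ time steps of the computation, and the clauses express (i) that the initial configuration encodes $w$, (ii) that exactly one cell, state, and head position is selected at each time step, (iii) that successive configurations are consistent with the transition function of $M$, and (iv) that some configuration is accepting. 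Each of these conditions can be written as a polynomial-size conjunction of clauses, and $\Phi_{M,w}$ can then be put into CNF by standard means. This establishes NP-hardness of SAT.

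The second stage is the reduction from SAT to 3SAT. Given an instance of SAT with variable set $X$ and clause set $\mathcal{C}'$, I would replace each clause by an equivalent conjunction of clauses each of size exactly three, introducing new auxiliary variables. A clause with a single literal $\ell$ becomes $(\ell\vee y_1\vee y_2)\wedge(\ell\vee\bar{y_1}\vee y_2)\wedge(\ell\vee y_1\vee\bar{y_2})\wedge(\ell\vee\bar{y_1}\vee\bar{y_2})$; a clause $(\ell_1\vee\ell_2)$ becomes $(\ell_1\vee\ell_2\vee y)\wedge(\ell_1\vee\ell_2\vee\bar{y})$; a clause of size three is kept; and a clause $(\ell_1\vee\cdots\vee\ell_k)$ with $k\geq 4$ becomes $(\ell_1\vee\ell_2\vee y_1)\wedge(\bar{y_1}\vee\ell_3\vee y_2)\wedge\cdots\wedge(\bar{y_{k-3}}\vee\ell_{k-1}\vee\ell_k)$ with fresh auxiliary variables $y_1,\ldots,y_{k-3}$. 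A straightforward verification shows that the resulting 3SAT formula is satisfiable if and only if the original SAT formula is, and the reduction is clearly polynomial-time.

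The main obstacle is the Cook--Levin step, whose bookkeeping is lengthy: one must carefully design the variables for the computation tableau and write out clauses enforcing consistency of successive configurations under the transition relation, while keeping the total size polynomial in $|w|$. The SAT-to-3SAT step is essentially syntactic and presents no real difficulty beyond checking the equivalence of each local replacement.
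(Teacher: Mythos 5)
Your proposal is the standard classical argument (Cook--Levin for SAT, then Karp's clause-splitting reduction to 3SAT), and it is correct; the splitting gadgets for clauses of size $1$, $2$, $3$ and $k\geq 4$ are all the usual ones and preserve satisfiability as claimed. The paper does not prove this theorem itself but simply cites Karp's work, which is precisely the route you take, so there is nothing to compare beyond noting that your sketch would need the (admittedly lengthy but routine) tableau bookkeeping written out to be a complete proof.
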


We further consider the restricted problem $(3,B2)$-SAT which is obtained from 3SAT by restricting to instances $(X,\mathcal{C})$ in which for every $x \in X$, the literals $x$ and $\bar{x}$ appear exactly twice each.

We need the following strengthening of Theorem \ref{3satdure} that can be found in \cite{BKS}.

\begin{Theorem}\label{3b2satdure}
$(3,B2)$-SAT is NP-complete.
\end{Theorem}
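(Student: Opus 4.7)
The plan is to reduce from 3SAT, whose NP-completeness is given by Theorem~\ref{3satdure}; membership in NP is immediate since a satisfying assignment is a polynomial-size certificate that can be checked in linear time. So the task reduces to constructing, from a 3SAT instance $(X,\mathcal{C})$, an equivalent $(3,B2)$-SAT instance $(X',\mathcal{C}')$ in polynomial time.

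The guiding idea is the standard ``copy-and-equality'' reduction. For each variable $x\in X$ with $k(x)$ occurrences in $\mathcal{C}$, I would introduce $k(x)$ fresh copies $x^1,\dots,x^{k(x)}$ and substitute $x^i$ for the $i$-th occurrence of $x$ while preserving its polarity. To force all copies to agree, I would add a cyclic chain of implication clauses $(\bar{x}^i \vee x^{i+1})$, indices taken modulo $k(x)$; in a satisfying assignment this chain collapses to $x^1=x^2=\cdots=x^{k(x)}$, and the original and new instances become equisatisfiable. So the core combinatorial task is to realize this skeleton within the $(3,B2)$ format.

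Two engineering steps are then needed. First, each implication clause has only two literals and must be padded to length three by a fresh auxiliary literal; these auxiliaries themselves must be organized so that every one of them appears exactly twice positive and twice negative. Second, the bare cyclic chain already contributes one positive and one negative occurrence to each $x^i$, while the substituted occurrence adds one more in a fixed polarity, leaving a deficit of exactly one in the opposite polarity. I would close this deficit by attaching an additional balanced gadget (for instance a second, ``reverse'' cyclic chain padded with further auxiliaries arranged in balanced pairs, or a small fixed-size block per copy), tuned so that each $x^i$ and each auxiliary variable end up with exactly two positive and two negative occurrences.

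The main obstacle is precisely this simultaneous bookkeeping: every variable introduced, including the padding variables, must satisfy the $(3,B2)$ requirement, which forces the padding construction to be self-balancing. I would handle this by a short case analysis on the parity and size of $k(x)$, and by choosing the padding blocks from a small catalogue of gadgets each of which exhibits exactly the right surplus of positive and negative literals. Given such gadgets, correctness is routine: equality is enforced in every satisfying assignment, so satisfying assignments of $(X',\mathcal{C}')$ project onto satisfying assignments of $(X,\mathcal{C})$, and any satisfying assignment of $(X,\mathcal{C})$ extends to one of $(X',\mathcal{C}')$ by setting every copy of $x$ to the value of $x$ and the auxiliaries to their forced values. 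The construction is clearly of polynomial size, completing the reduction.
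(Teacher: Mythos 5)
The paper does not actually prove this theorem: it is imported verbatim from Berman, Karpinski and Scott \cite{BKS}. Your attempt therefore has to stand on its own, and while the skeleton you chose (one fresh copy per occurrence, a cyclic chain of implication clauses, padding to width three, occurrence balancing) is the standard route to such bounded-occurrence hardness results, there is a genuine gap at the central step. When you pad the two-literal clause $(\bar{x}^i \vee x^{i+1})$ with a fresh auxiliary literal $\lambda$, the resulting clause enforces $x^i \Rightarrow x^{i+1}$ only if $\lambda$ is guaranteed to be FALSE in \emph{every} satisfying assignment. You impose on the auxiliaries only the counting condition (twice positive, twice negative each), not this semantic condition. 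Without it, a satisfying assignment of the new instance may set some padding literals TRUE, the copies $x^1,\dots,x^{k(x)}$ may then disagree, and the projection back to an assignment of $X$ is no longer well defined, so the backward direction of your equivalence fails. Arranging auxiliary variables whose values are \emph{forced} while every variable, including those in the forcing gadget itself, still occurs exactly twice in each polarity is precisely the nontrivial content of the construction in \cite{BKS}; it cannot be dismissed as routine bookkeeping. Indeed, a theorem of Tovey shows that exact-3-clause formulas in which every variable occurs at most three times are \emph{always} satisfiable, so in this regime one must genuinely verify that any proposed forcing gadget forces anything at all.

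A secondary, smaller issue: your proposed fix for the occurrence deficit via a ``reverse'' cyclic chain does not balance the counts as claimed. The forward chain already contributes one positive and one negative occurrence to each copy $x^i$, and the substituted occurrence contributes one more in a fixed polarity; adding a full reverse chain contributes yet another positive and another negative occurrence, leaving each copy with three occurrences of one polarity and two of the other rather than two of each. This again points to the need for an explicit, verified gadget catalogue rather than an appeal to tuning. In short, the high-level plan is the right one, but the proof as written omits the one step that makes the theorem true.
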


For the second reduction, we need the following result that can be found in \cite{FKM}.
\begin{Proposition}\label{dirtrim}
Let $\mathcal{D}$ be a dypergraph, $r \in V(\mathcal{D})$ and $S \subseteq V(\mathcal{D})$. Then $\mathcal{D}$ contains a subdypergraph that can be transformed into an $(r,S)$-Steiner arborescence by a directed trimming if and only if all vertices in $S$ are reachable from $r$ in $\mathcal{D}$.
\end{Proposition}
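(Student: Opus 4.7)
The only-if direction is routine: if $\mathcal{D}$ contains a subdypergraph $\mathcal{D}'$ with a directed trimming $B$ that is an $(r,S)$-Steiner arborescence, then for every $v \in S$ the directed $rv$-path in $B$ lifts to a corresponding sequence of dyperedges of $\mathcal{D}'$. For any $X$ with $v\in X$ and $r\notin X$, taking the first vertex $u_j$ along this path that lies in $X$, the dyperedge corresponding to the $j$-th arc has its head $u_j$ in $X$ and $u_{j-1}\in tail(a)-X$ in its tail, so it enters $X$ in $\mathcal{D}$. Hence $d^-_{\mathcal{D}}(X)\geq 1$, and therefore $\lambda_{\mathcal{D}}(r,v)\geq 1$.

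For the if direction, the plan is to pass to the incidence digraph $D(\mathcal{D})$ and reduce to classical reachability for digraphs. The first step is to show that every $v \in S$ is reachable from $r$ in $D(\mathcal{D})$ in the ordinary digraph sense. Suppose otherwise, and let $Y$ be the set of vertices of $D(\mathcal{D})$ not reachable from $r$, so that no arc of $D(\mathcal{D})$ enters $Y$. Setting $X=Y\cap V(\mathcal{D})$, no dyperedge of $\mathcal{D}$ can enter $X$: if $a$ were such a dyperedge, then $head(a)\in X\subseteq Y$ would force $z_a\in Y$ (otherwise the arc $z_a\,head(a)$ would enter $Y$), after which any $w\in tail(a)-X$, being an original vertex outside $Y$, would give an arc $wz_a$ entering $Y$. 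Thus $d^-_{\mathcal{D}}(X)=0$, contradicting $\lambda_{\mathcal{D}}(r,v)\geq 1$.

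Once reachability of $S$ from $r$ holds in $D(\mathcal{D})$, a standard argument yields an $(r,S)$-Steiner arborescence $B$ in $D(\mathcal{D})$. Every $z_a\in V(B)-\{r\}$ has in-degree exactly $1$ in $B$, and by arc-minimality of $B$ together with $z_a\notin S$, such a $z_a$ cannot be a leaf; hence both its unique incoming arc $v_a z_a$ and its unique outgoing arc $z_a\,head(a)$ belong to $B$. I would then take $\mathcal{D}'$ to consist of all dyperedges $a$ with $z_a\in V(B)$, and trim each such $a$ to the arc $v_a\,head(a)$. A short count shows that the resulting digraph has vertex set $V(B)\cap V(\mathcal{D})$, each non-root vertex has in-degree exactly $1$, and every $v\in S$ inherits a directed $rv$-path from $B$, so this digraph is the desired $(r,S)$-Steiner arborescence. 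I do not foresee a genuine obstacle; the only conceptual point is the equivalence between reachability in $\mathcal{D}$ and in $D(\mathcal{D})$, after which the statement reduces to the classical digraph fact.
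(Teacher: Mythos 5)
Your argument is essentially correct, but note that the paper does not prove this proposition at all: it is imported from the reference [FKM] (Frank, Kir\'aly, Kriesell) as a known fact, so there is no in-paper proof to compare against. Your only-if direction is the standard cut argument and is fine. Your if direction routes through the incidence digraph $D(\mathcal{D})$; the reachability transfer you establish there is exactly the $\lambda\geq 1$ case of the paper's Proposition~\ref{simple}, so your proof is consistent with the toolkit the paper itself builds, and you could simply invoke that proposition instead of reproving it. The translation of an arborescence in $D(\mathcal{D})$ back to a trimmed subdypergraph is also sound: since each $z_a$ has a unique out-arc $z_a\,head(a)$ in $D(\mathcal{D})$, a $z_a$ that is not a leaf of $B$ carries both its in-arc $v_az_a$ and $z_a\,head(a)$, and contracting these pairs yields the desired arborescence. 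The one point to tighten: under the paper's definition, an $r$-arborescence is arc-minimal subject to every vertex of $V(B)$ being reachable from $r$, and this by itself does not forbid leaves outside $S$ (a BFS arborescence of the whole reachable set already qualifies). You need to choose $B$ arc-minimal among subdigraphs of $D(\mathcal{D})$ in which all of $S$ is reachable from $r$ (equivalently, iteratively delete leaves not in $S$); only then does $z_a\notin S$ force $z_a$ to be internal. This is a one-line repair, not a flaw in the approach.
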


We further need the following two results.

\begin{Proposition}\label{simple}
Let $\mathcal{D}$ be a dypergraph. Then for any pair $(u,v)$ of vertices in $V(\mathcal{D})$, we have $\lambda_{D(\mathcal{D})}(u,v)=\lambda_{\mathcal{D}}(u,v)$.
\end{Proposition}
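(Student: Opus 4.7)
The plan is to establish $\lambda_{D(\mathcal{D})}(u,v)\leq \lambda_{\mathcal{D}}(u,v)$ and $\lambda_{\mathcal{D}}(u,v)\leq \lambda_{D(\mathcal{D})}(u,v)$ separately by explicit translations between cuts on the two sides. Nothing deep is needed, only a careful bookkeeping of which arcs of $D(\mathcal{D})$ are associated with each dyperedge $a \in \mathcal{A}(\mathcal{D})$, namely the arcs $wz_a$ for $w \in tail(a)$ and the arc $z_a\, head(a)$.

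For the first inequality, I would choose $X\subseteq V(\mathcal{D})$ with $v\in X$, $u\notin X$ and $d_{\mathcal{D}}^-(X)=\lambda_{\mathcal{D}}(u,v)$, and set $Y = X\cup\{z_a : a \in \mathcal{A}(\mathcal{D}),\ tail(a)\subseteq X\}$. Note $v\in Y$ and $u\notin Y$, since $u\in V(\mathcal{D})$ is not of the form $z_a$. Per dyperedge $a$: an arc $wz_a$ with $w\in tail(a)$ cannot enter $Y$, because $z_a\in Y$ forces $tail(a)\subseteq X\subseteq Y$ and hence $w\in Y$; and the arc $z_a\, head(a)$ enters $Y$ iff $head(a)\in X$ and $tail(a)\not\subseteq X$, which is exactly the condition that $a$ enters $X$ in $\mathcal{D}$. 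Summing over $a$ gives $d_{D(\mathcal{D})}^-(Y) = d_{\mathcal{D}}^-(X)$.

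For the reverse inequality, I would choose $Y\subseteq V(D(\mathcal{D}))$ with $v\in Y$, $u\notin Y$ attaining $\lambda_{D(\mathcal{D})}(u,v)$, and set $X = Y\cap V(\mathcal{D})$; then $v\in X$ and $u\notin X$. For each dyperedge $a$ entering $X$ in $\mathcal{D}$, pick some $w\in tail(a)-X$ and note $head(a)\in X\subseteq Y$. A two-case argument produces a witnessing arc entering $Y$: if $z_a\in Y$, then $wz_a$ enters $Y$ (as $w\in V(\mathcal{D})-X$ implies $w\notin Y$); if $z_a\notin Y$, then $z_a\, head(a)$ enters $Y$. The witnesses for distinct $a$ are distinct, because all arcs of $D(\mathcal{D})$ incident to $z_a$ belong to $a$ alone; this yields $d_{\mathcal{D}}^-(X)\leq d_{D(\mathcal{D})}^-(Y)$.

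The only (mild) obstacle is selecting the correct $Y$ in the forward direction: the naive choice $Y=X$ would over-count, since every arc $z_a\, head(a)$ with $head(a)\in X$ would then enter $Y$, including those with $tail(a)\subseteq X$, for which $a$ itself does not enter $X$ in $\mathcal{D}$. Putting $z_a$ into $Y$ precisely for these dyperedges cancels the spurious contributions without, as checked above, creating any new entering arcs.
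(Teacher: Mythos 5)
Your proposal is correct and follows essentially the same route as the paper: one direction intersects an optimal cut of $D(\mathcal{D})$ with $V(\mathcal{D})$ and charges each entering dyperedge to a distinct entering arc, and the other augments an optimal cut of $\mathcal{D}$ by exactly those $z_a$ with $tail(a)$ inside the cut. Your version is slightly more explicit about why the chosen witnesses are distinct and why the naive choice $Y=X$ would over-count, but the constructions coincide with the paper's.
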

\begin{proof}
First let $X \subseteq V(D(\mathcal{D}))$ with $v \in X, u \in  V(D(\mathcal{D}))-X$ and $d_{D(\mathcal{D})}^-(X)=\lambda_{D(\mathcal{D})}(u,v)$. Let $X'=X \cap V(\mathcal{D})$. Then for every $a\in \delta_{\mathcal{D}}^-(X')$, we have $head(a)\in X$ and $tail(a)-X \neq \emptyset$, so either the arc $z_ahead(a)$ enters $X$ in $D(\mathcal{D})$ or the arc $wz_a$ enters $X$ in $D(\mathcal{D})$ for some $w \in tail(a)$. Hence $\lambda_{\mathcal{D}}(u,v)\leq d_{\mathcal{D}}^-(X')\leq d_{D(\mathcal{D})}^-(X)=\lambda_{D(\mathcal{D})}(u,v)$.

Now let $X \subseteq V(\mathcal{D})$ with $v \in X, u \in  V(\mathcal{D})-X$ and $d_{\mathcal{D}}^-(X)=\lambda_{\mathcal{D}}(u,v)$. Let $X'\subseteq V(D(\mathcal{D}))$ be the set that contains $X$ and the vertex $z_a$ for all $a \in A(\mathcal{D})$ for which $tail(a)\subseteq X$ holds. Now every arc entering $X'$ in   $D(\mathcal{D})$ is of the form $z_a head(a)$ such that $a$ enters $X$ in $\mathcal{D}$.  Hence $\lambda_{D(\mathcal{D})}(u,v)\leq d_{D(\mathcal{D})}^-(X')\leq d_{\mathcal{D}}^-(X)=\lambda_{\mathcal{D}}(u,v)$.
\end{proof}
\begin{Proposition}\label{propch}
Let $\mathcal{H}$ be a hypergraph and let $e \in \mathcal{E}(\mathcal{H})$. Further, let ${\vec{\mathcal{H}}}_0$ be an orientation of $\mathcal{H}$ and let $\vec{e}_0$ be the orientation of $e$ in ${\vec{\mathcal{H}}}_0$. Suppose that there is some $x \in tail(\vec{e}_0)$ such that $\lambda_{\vec{\mathcal{H}}_0}(head(\vec{e}_0),x)\geq 1$. Then there is an orientation ${\vec{\mathcal{H}}}_1$ of $\mathcal{H}$ such that $\lambda_{\vec{\mathcal{H}}_1}(u,v)=\lambda_{\vec{\mathcal{H}}_0}(u,v)$ for every ordered pair  $(u,v)$  in $V(\mathcal{H})$ and $head(\vec{e}_1)=x$ where $\vec{e}_1$ is the orientation of $e$ in ${\vec{\mathcal{H}}}_1$.
\end{Proposition}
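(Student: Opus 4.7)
The plan is to adapt the classical ``circuit-reversal preserves arc-connectivity'' argument from digraphs to the hyperedge setting. Write $y := head(\vec{e}_0)$. By hypothesis $\lambda_{\vec{\mathcal{H}}_0}(y,x) \geq 1$, so there is a simple directed path $P \colon y = v_0, f_1, v_1, f_2, \ldots, f_k, v_k = x$ in $\vec{\mathcal{H}}_0$, with $head(f_i) = v_i$ and $v_{i-1} \in tail(f_i)$. Since $head(e) = y = v_0$ while $head(f_i) = v_i \neq v_0$ for $i \geq 1$, we have $e \notin \{f_1, \ldots, f_k\}$, and each $f_i$ is distinct. I would then define $\vec{\mathcal{H}}_1$ by leaving the orientation of every other hyperedge unchanged, re-orienting $e$ so that $head(\vec{e}_1) = x$, and re-orienting each $f_i$ so that its head becomes $v_{i-1}$ (``shifted one step backward'' along $P$); this is well-defined because $v_{i-1} \in tail(f_i) \subseteq f_i$.

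To establish the claim, I would show the stronger statement that $d^-_{\vec{\mathcal{H}}_1}(X) = d^-_{\vec{\mathcal{H}}_0}(X)$ for every $X \subseteq V(\mathcal{H})$, which immediately yields $\lambda_{\vec{\mathcal{H}}_1}(u,v) = \lambda_{\vec{\mathcal{H}}_0}(u,v)$ for all ordered pairs. Write $\chi$ for the indicator function of $X$. Only the contributions of $e, f_1, \ldots, f_k$ to $d^-(X)$ can change, and a short case analysis shows that the change in the contribution of each $f_i$ equals $\chi(v_{i-1}) - \chi(v_i)$, while the change from $e$ equals $\chi(x) - \chi(y) = \chi(v_k) - \chi(v_0)$. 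Summing these contributions telescopes:
\[
\sum_{i=1}^k \bigl(\chi(v_{i-1}) - \chi(v_i)\bigr) + \bigl(\chi(v_k) - \chi(v_0)\bigr) = 0,
\]
so $d^-_{\vec{\mathcal{H}}_1}(X) = d^-_{\vec{\mathcal{H}}_0}(X)$, as desired.

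The main technical subtlety is that a hyperedge enters $X$ only when both $head \in X$ and $tail \not\subseteq X$ hold, so the naive formula ``new head indicator minus old head indicator'' is not automatically correct: the cases $f_i \subseteq X$ and $e \subseteq X$ (where both candidate heads lie in $X$ but neither orientation enters $X$) must be handled separately. Happily, in those cases the indicator difference $\chi(v_{i-1}) - \chi(v_i)$ (resp.\ $\chi(x) - \chi(y)$) is $1 - 1 = 0$, matching the actual change; similarly when neither candidate head lies in $X$ the formula gives $0 - 0 = 0$. Once this case check is in hand, the telescoping sum above finishes the proof.
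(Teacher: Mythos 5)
Your proof is correct. It is the same underlying idea as the paper's -- reverse a directed cycle through $e$ -- but the execution differs. The paper first passes to the auxiliary digraph $D(\vec{\mathcal{H}}_0)$ via Proposition \ref{simple}, finds a circuit through the two arcs $xz_{\vec{e}_0}$ and $z_{\vec{e}_0}head(\vec{e}_0)$, reverses it (invoking the classical fact that circuit reversal in a digraph preserves all local arc-connectivities), and then translates back. You instead perform the reversal directly on the dypergraph and verify from scratch the stronger statement that $d^-_{\vec{\mathcal{H}}_1}(X)=d^-_{\vec{\mathcal{H}}_0}(X)$ for every $X\subseteq V(\mathcal{H})$, via the telescoping sum; your case check that the change in each dyperedge's contribution is exactly $\chi(h')-\chi(h)$ (including the degenerate case where the whole hyperedge lies in $X$) is the right one, and your observation that $\vec{e}_0\notin\{f_1,\ldots,f_k\}$ because the heads along a simple path are distinct is the analogue of the paper's remark that the path in $D(\vec{\mathcal{H}}_0)$ avoids the two arcs incident to $z_{\vec{e}_0}$. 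What your route buys is self-containedness (no appeal to Proposition \ref{simple}) and an explicit cut-level identity; what the paper's route buys is brevity, since it reuses machinery already established. The only point you gloss over is the existence of the simple directed path itself from $\lambda_{\vec{\mathcal{H}}_0}(y,x)\geq 1$ -- a standard reachability argument (the set of vertices not reachable from $y$ would otherwise be an $x$-containing set with in-degree $0$) that is worth one sentence but is not a gap.
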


\begin{proof}

We obtain by Proposition \ref{simple} that $D(\vec{\mathcal{H}}_0)$ contains a directed path from $head(\vec{e}_0)$ to $x$. As this directed path contains none of the arcs $xz_{\vec{e}_0}$ and $z_{\vec{e}_0}head(\vec{e}_0)$, we obtain that $D(\vec{\mathcal{H}}_0)$ contains a circuit containing the arcs $xz_{\vec{e}_0}$ and $z_{\vec{e}_0}head(\vec{e}_0)$. Let $D_1$ be the digraph obtained from $D(\vec{\mathcal{H}}_0)$ by reversing all the arcs of this cycle. Note that we have $\lambda_{D_1}(u,v)=\lambda_{\vec{\mathcal{H}}_0}(u,v)$ for all ordered pairs $(u,v)$ in $V(\mathcal{H})$ and there is an orientation ${\vec{\mathcal{H}}}_1$ of $\mathcal{H}$ such that $D(\vec{\mathcal{H}}_1)=D_1$. Observe that $head(\vec{e}_1)=x$  where $\vec{e}_1$ is the orientation of $e$ in ${\vec{\mathcal{H}}}_1$. Further, for all ordered pairs  $(u,v)$ in $V(\mathcal{H})$, by Proposition \ref{simple}, we have $\lambda_{\vec{\mathcal{H}}_1}(u,v)=\lambda_{D(\vec{\mathcal{H}}_1)}(u,v)=\lambda_{D(\vec{\mathcal{H}}_0)}(u,v)=\lambda_{\vec{\mathcal{H}}_0}(u,v)$.
\end{proof}

For the proof of Theorem \ref{sfix}, we need the following result due to Kawarabayashi, Kobayashi and Reed \cite{KKR} which improves upon an earlier result of Robertson and Seymour \cite{rs}.

\begin{Lemma}\label{l4}
Let $G$ be a graph and $(u_1,v_1),$ $\ldots,$ $(u_k,v_k)$ pairs of vertices in $V(G)$. Then there  exist a function $f:\mathbb{Z}_{\geq 0}\rightarrow \mathbb{Z}_{\geq 0}$ and an algorithm that runs in $O(f(k)n^2)$ and decides whether there is a set of internally vertex-disjoint paths $P_1,\ldots,P_k$ such that $P_i$ is a $u_iv_i$-path for $i=1,\ldots,k$.
\end{Lemma}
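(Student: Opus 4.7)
The plan is to follow the irrelevant vertex paradigm introduced by Robertson and Seymour in the Graph Minors series and sharpened to $O(f(k)n^2)$ by Kawarabayashi, Kobayashi and Reed. The overall strategy is a dichotomy on the tree-width of $G$, together with a reduction rule that, in the high tree-width regime, deletes a vertex that provably cannot be used by any solution.

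First I would handle the low-complexity case: if $G$ admits a tree decomposition of width at most $g(k)$ for some function $g$, then a standard dynamic program over the decomposition solves the $k$-disjoint-paths problem in time $f_1(k)\cdot n$. The state at a bag records, for each vertex of the bag, whether it is already on some partial $u_i v_i$ path and, if so, which path and which endpoint-role it plays; since the bag has bounded size the number of states is bounded by a function of $k$ only. This reduces the problem, once a suitable decomposition is found, to a linear-time sweep.

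Second, in the high tree-width regime, I would invoke the excluded grid theorem to produce, in time $O(f_2(k)n^2)$, a large flat wall $W$ in $G$ of size $h(k)\times h(k)$. The core technical step, and the main obstacle, is the irrelevant vertex lemma: if $h(k)$ is chosen large enough with respect to $k$, then the central vertex $v$ of $W$ is irrelevant, meaning that every feasible linkage $P_1,\ldots,P_k$ can be rerouted to avoid $v$. The proof of this lemma is the deep part; it relies on the unique linkage theorem, showing that inside a sufficiently deep flat wall any use of $v$ by a linkage can be rerouted using the wall's concentric cycles, so that $v$ is never essential. Once $v$ is identified one deletes it and repeats.

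The algorithm alternates between these two regimes: either a decomposition of width at most $g(k)$ is found, and we solve by dynamic programming, or a large wall is found, an irrelevant vertex is deleted, and we recurse on a graph with one fewer vertex. Each deletion step costs $O(f_3(k)n^2)$ by the efficient wall-finding of Kawarabayashi--Kobayashi--Reed, and at most $n$ deletions occur, so a naive bound gives $O(f(k)n^3)$; the $O(f(k)n^2)$ bound claimed in Lemma \ref{l4} follows by amortising the wall-finding work across many deletions, which is the other delicate part of the KKR argument. The main conceptual obstacle, as mentioned, is establishing the irrelevant vertex lemma in a form strong enough for the quadratic running time; the rest is orchestration of known subroutines.
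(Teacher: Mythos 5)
The paper does not prove this lemma at all: it is imported verbatim as the disjoint paths theorem of Kawarabayashi, Kobayashi and Reed \cite{KKR}, improving Robertson--Seymour \cite{rs}, and is used purely as a black box inside Lemma \ref{trouver}. Your sketch correctly reproduces the architecture of the KKR proof (bounded tree-width dynamic programming versus flat-wall/irrelevant-vertex reduction, with amortisation to get from cubic to quadratic time), but the two steps you explicitly defer --- the unique linkage theorem underlying the irrelevant vertex lemma, and the amortised wall-finding --- are not ``delicate parts'' to be filled in later; they constitute essentially the entire mathematical content of a very long argument spanning several Graph Minors papers plus \cite{KKR}. As a self-contained proof your text therefore has a gap the size of the theorem itself; the appropriate justification in the context of this paper is simply the citation, which is what the authors do.

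One small point worth noting that your proposal glosses over: the lemma as stated here asks for \emph{internally} vertex-disjoint paths in the sense defined in Section \ref{def}, i.e.\ the paths may share endpoints (and indeed must be allowed to, since in the application the $u_iv_i$-pairs come from the edges of a Steiner tree and several paths meet at each branch vertex). The theorem of \cite{KKR} is stated for pairwise vertex-disjoint paths with distinct terminals, so one needs the standard reduction of splitting each terminal into one copy per path that uses it before invoking the black box. This is routine but should be said if one is writing the argument out.
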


We further need the following well-known property of trees.

\begin{Proposition}\label{deg3}
Let $T$ be a tree. Then the number of vertices $v \in V(T)$ with $d_T(v)=1$ is at least two more than the number of vertices  $v \in V(T)$ with $d_T(v)\geq 3$.
\end{Proposition}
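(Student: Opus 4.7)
The plan is to use a standard degree-sum argument. For a tree $T$, let $n_i$ denote the number of vertices of degree exactly $i$. Since $T$ has $|V(T)|-1$ edges, the handshake lemma gives $\sum_{i\geq 1} i \cdot n_i = 2|E(T)| = 2|V(T)|-2 = 2\sum_{i\geq 1} n_i - 2$. Rearranging yields $\sum_{i\geq 1}(i-2)n_i = -2$.

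Next, I would isolate the $n_1$ term (noting that $T$ has no isolated vertex so $n_0 = 0$): the identity becomes $-n_1 + \sum_{i\geq 3}(i-2)n_i = -2$, so $n_1 = 2 + \sum_{i\geq 3}(i-2)n_i$. Since $(i-2) \geq 1$ for all $i\geq 3$, this gives $n_1 \geq 2 + \sum_{i\geq 3} n_i$, which is precisely the claim that the number of leaves exceeds the number of vertices of degree at least 3 by at least two.

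There is no real obstacle here; the only minor point to handle is the degenerate case where $T$ consists of a single vertex, but the statement in the paper implicitly concerns trees as edge-minimal $1$-edge-connected graphs, for which $|V(T)|\geq 2$ and the counting identity applies directly. (Alternatively, the single-vertex case must be excluded since it has no leaves.)
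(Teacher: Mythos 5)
Your proof is correct: the degree-sum identity $\sum_{i\ge 1}(i-2)n_i=-2$ together with $n_0=0$ immediately gives $n_1=2+\sum_{i\ge 3}(i-2)n_i\ge 2+\sum_{i\ge 3}n_i$. The paper states this proposition as a well-known fact without any proof, so there is nothing to compare against; your argument is the standard one, and your remark about excluding the one-vertex tree (for which the statement would fail) is the only edge case worth mentioning.
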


We finally require the following classic theorem due to Cayley \cite{ca}.

\begin{Theorem}\label{cayley}
The number of distinct labelled trees on a ground set of $n$ vertices is $n^{n-2}$.
\end{Theorem}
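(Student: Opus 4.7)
The plan is to establish a bijection between the set of labelled trees on the vertex set $[n] = \{1, 2, \ldots, n\}$ and the set of sequences of length $n-2$ with entries in $[n]$; since the latter set has cardinality $n^{n-2}$, this is exactly the count we want. The classical bijection for this purpose is the Pr\"ufer code.

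For the encoding direction, given a tree $T$ on vertex set $[n]$ with $n\ge 2$, I would iteratively construct a sequence $(a_1,a_2,\ldots,a_{n-2})$ as follows. For $i=1,\ldots,n-2$, let $\ell_i$ be the leaf of the current tree with smallest label (which exists by Proposition \ref{deg3}), let $a_i$ be its unique neighbour, and then delete $\ell_i$. After $n-2$ such deletions, exactly one edge remains. The sequence produced this way lies in $[n]^{n-2}$, and the map $T\mapsto (a_1,\ldots,a_{n-2})$ is evidently well-defined.

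For the inverse direction, the key observation is that for any vertex $v\in V(T)$, the number of times $v$ appears in its Pr\"ufer sequence is exactly $d_T(v)-1$; in particular, the leaves of $T$ are precisely the vertices of $[n]$ not appearing in the sequence. Using this, one can reconstruct $T$ from any sequence $(a_1,\ldots,a_{n-2})\in [n]^{n-2}$ by repeatedly identifying the smallest element of $[n]$ not appearing in the remaining sequence together with the already-used leaves, joining it to $a_i$, and then removing $a_i$ from the sequence. A final edge is added between the two leftover vertices at the end. Verifying that this procedure is well-defined, produces a tree on $[n]$, and is a two-sided inverse to the encoding is routine by induction on $n$.

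The main obstacle — though a mild one — is checking carefully that the decoding procedure always yields a connected acyclic graph and that it truly inverts the encoding step by step; this is best handled by an induction on $n$, using that after deleting the smallest leaf $\ell_1$, the tree $T-\ell_1$ on $n-1$ vertices has Pr\"ufer sequence $(a_2,\ldots,a_{n-2})$ (now viewed as a sequence over $[n]\setminus\{\ell_1\}$). Once this inductive step is in hand, the bijection is established and Cayley's formula $n^{n-2}$ follows immediately.
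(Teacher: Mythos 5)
The paper does not prove this statement at all; it is quoted as a classical theorem of Cayley and supported only by a citation, so there is no internal proof to compare against. Your Pr\"ufer-code argument is the standard and correct proof of Cayley's formula: the encoding by repeatedly deleting the smallest-labelled leaf, the observation that a vertex $v$ occurs exactly $d_T(v)-1$ times in the resulting sequence, and the decoding procedure together give a bijection between labelled trees on $[n]$ and sequences in $[n]^{n-2}$, and the inductive verification you defer (that deleting the smallest leaf reduces to the Pr\"ufer sequence of the smaller tree) is indeed routine. The only cosmetic remark is that Proposition \ref{deg3}, which you invoke for the existence of a leaf, implicitly assumes $n\geq 2$; since $n^{n-2}=1$ for $n\in\{1,2\}$ is checked directly, this causes no difficulty.
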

\section{Proofs}\label{redu}

In this section, we give the proofs of the main theorems of this article.
\subsection{Steiner hypertrees}
This section is dedicated to proving the results on finding Steiner hypertrees in a given hypergraph. We first prove the negative result when the size of  the terminal set is not fixed in Section \ref{baumschwer} and then prove the positive result for a fixed number of terminals in Section \ref{fixleicht}.
\subsubsection{The proof of Theorem \ref{treehard}}\label{baumschwer}

\begin{proof}(of Theorem \ref{treehard})
 Clearly, the problem is in NP. We prove the hardness by a reduction from 3SAT. Let $(X,\mathcal{C})$ be an instance of 3SAT. 
 We now create an instance $(\mathcal{H},S)$ of SHT.
For every $x \in X$, we let $V(\mathcal{H})$ contain 2 vertices $w_x$ and $w_{\bar{x}}$. Next, for every $C \in \mathcal{C}$, we let $V(\mathcal{H})$ contain a vertex $z_C$. Further, we let $V(\mathcal{H})$ contain one more vertex $a$. Let $W=\bigcup_{x \in X}\{w_x,w_{\bar{x}}\}$ and $Z=\{z_C:C \in \mathcal{C}\}$.
For every $x \in X$, we let $\mathcal{E}(\mathcal{H})$ contain a hyperedge $e_x=\{a,w_x,w_{\bar{x}}\}$. Next, for every $C \in \mathcal{C}$, we let $\mathcal{E}(\mathcal{H})$ contain a hyperedge $e_C=\{\{w_\ell:\ell \in C\}\cup z_C\}$. 
Finally, we set $S= Z \cup a$.
This finishes the description of $(\mathcal{H},S)$. 

An illustration can be found in Figure \ref{fig0}.

\begin{figure}[h]
    \centering
        \includegraphics[width=.7\textwidth]{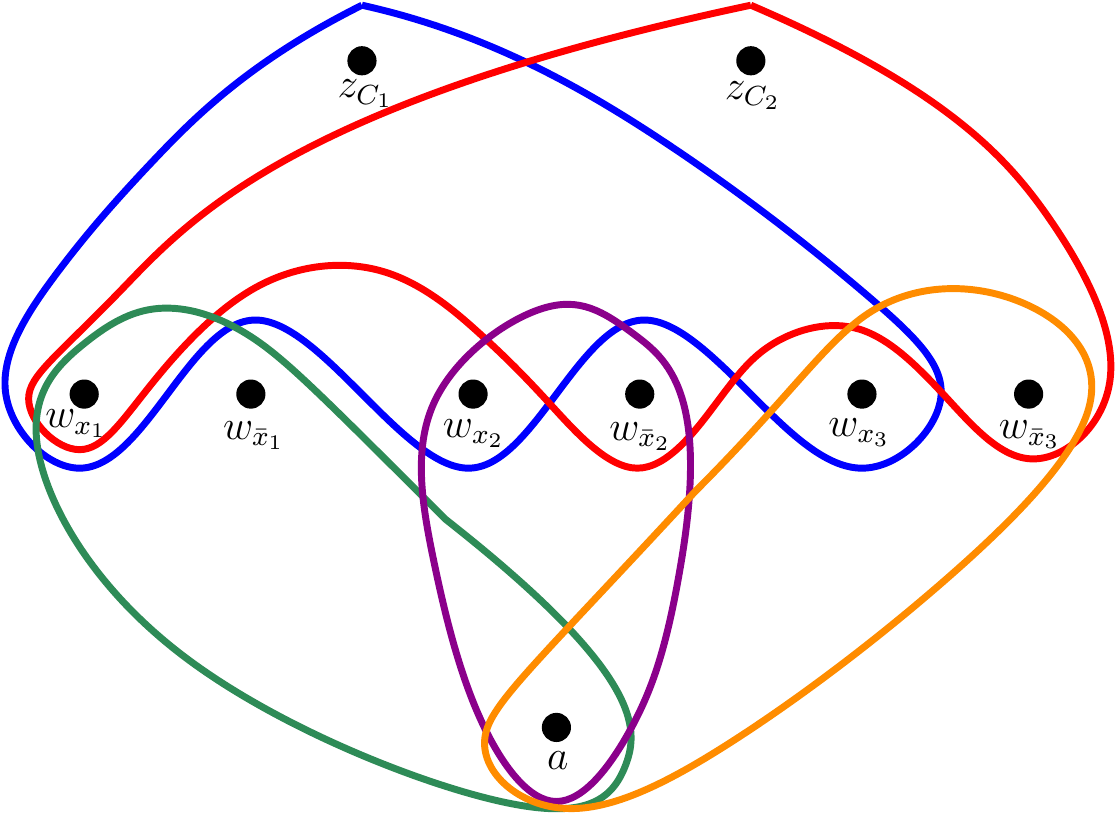}
        \caption{An example of the construction of $(\mathcal{H},S)$ for a 3SAT formula with $X=\{x_1,x_2,x_3\}$ and $\mathcal{C}=\{C_1=\{x_1,x_2,x_3\},C_2=\{x_1,\bar{x}_2,\bar{x}_3\}\}$.}\label{fig0}
\end{figure}
\medskip

We now prove that $(\mathcal{H},S)$ is a positive instance of SHT if and only if $(X,\mathcal{C})$ is a positive instance of 3SAT. 
\smallskip

First suppose that $(X,\mathcal{C})$ is a positive instance of 3SAT, so there is an assignment $\phi:X \rightarrow \{True,False\}$ that satisfies every clause of $\mathcal{C}$.

It suffices to prove that $\mathcal{H}$ can be trimmed to a $(Z \cup a)$-Steiner tree. For every $x \in X$, we trim $e_x$ to an edge $e'_x$ where $e'_x=aw_x$ if $\phi(x)=TRUE$ and $e'_x=aw_{\bar{x}}$ if $\phi(x)=FALSE$. Now consider some $C \in \mathcal{C}$. As $\phi$ is satisfying, we can choose some $\ell \in C$ such that $\phi(\ell)=TRUE$. We trim $e_C$ to an edge $e'_C$ where $e'_C=w_\ell z_C$. Now let $T$ be the graph that contains all vertices contained in $e'_x$ for some $x \in X$ and all the vertices contained in $e'_C$ for some $C \in \mathcal{C}$ and whose edge set is $\{e'_x:x \in X\}\cup \{e'_C:C\in \mathcal{C}\}$. Clearly, we have $Z \cup a \subseteq V(T)$. It also follows directly from the construction that $T$ is a tree. Hence $T$ is a $(Z \cup a)$-Steiner tree and so $\mathcal{H}$ is a $(Z \cup a)$-Steiner hypertree.

\medskip 

Now suppose that $(\mathcal{H},Z \cup a)$ is a positive instance of SHT, so there is a subhypergraph $\mathcal{T}$ of $\mathcal{H}$ which can be trimmed to a $(Z \cup a)$-Steiner tree $T$. We now define a truth assignment $\phi:X \rightarrow \{TRUE,FALSE\}$ in the following way: if the hyperedge $e_x$ is contained in $\mathcal{E}(\mathcal{T})$ and is trimmed to the edge $aw_x$ in $T$, we set $\phi(x)=TRUE$. Otherwise, we set $\phi(x)=FALSE$.

In order to prove that $\phi$ is satisfying, first observe that as $T$ is a $(Z \cup a)$-Steiner tree and as $z_C$ is contained in only one hyperedge of $\mathcal{E}(\mathcal{H})$, we obtain that $d_T(z_C)=1$ for all $C \in \mathcal{C}$. Now fix some $C^* \in \mathcal{C}$. Let $b$ be the unique vertex such that the edge $e_{C^*}$ is trimmed to $bz_{C^*}$ in $T$. As $T$ is a $(Z \cup a)$-Steiner tree, we obtain that $T$ contains a $z_{C^*}a$-path $P$. As $d_T(z_C)=1$ for all $C \in \mathcal{C}$, it follows that $V(P)\cap Z=z_{C^*}$. By construction, this yields that $P=abz_{C^*}$. It follows that the unique hyperedge in $\mathcal{E}(\mathcal{H})$ containing $a$ and $b$ is trimmed to $ab$ in $T$.  We obtain that $b=w_\ell$ for some $\ell \in C^*$ and $\phi(\ell)=TRUE$, so $C^*$ is satisfied by $\phi$.
 As $C^*$ was chosen arbitrarily, $\phi$ is a satisfying assignment for $(X,\mathcal{C})$.

As the size of $\mathcal{H}$ is clearly polynomial in  the size of $(X,\mathcal{C})$ and by Theorem \ref{3satdure}, the statement follows.
\end{proof}

\subsubsection{Polynomial algorithm for a fixed number of terminals}\label{fixleicht}

This section is dedicated to proving Theorem \ref{sfix}. We first show that in order to do so, it suffices to consider a related problem in the incidence graph of the given hypergraph.  Recall that, for a hypergraph $\mathcal{H}$,  the incidence graph $G(\mathcal{H})$  of $\mathcal{H}$ is the graph which is obtained from $\mathcal{H}$ by replacing every $e \in \mathcal{E}(\mathcal{H})$ by a new vertex $z_e$ and edges $vz_e$ for all $v \in e$.
\begin{Lemma}\label{kurz1}
Let $\mathcal{H}$ be a hypergraph and $S \subseteq V(\mathcal{H})$. Then $\mathcal{H}$ contains an $S$-Steiner hypertree if and only if $G(\mathcal{H})$ contains an $S$-Steiner tree $T$ that satisfies $d_T(z_e)=2$ for all $e \in \mathcal{E}(\mathcal{H})$ with $z_e \in V(T)$.
\end{Lemma}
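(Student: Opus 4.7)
The plan is to set up a natural correspondence between trimmings of subhypergraphs of $\mathcal{H}$ and subgraphs of $G(\mathcal{H})$ in which every incidence vertex $z_e$ that appears has degree exactly $2$. The key observation driving both directions is that trimming a hyperedge $e$ to the edge $uv$ (with $u,v\in e$) is the same operation, viewed inside $G(\mathcal{H})$, as choosing the length-two path $u\, z_e\, v$.

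For the ``only if'' direction, I would start from an $S$-Steiner hypertree $\mathcal{T}\subseteq\mathcal{H}$ together with a trimming $T'$ of $\mathcal{T}$ that is an $S$-Steiner tree. For each $e\in\mathcal{E}(\mathcal{T})$, let $u_e v_e$ denote the trimmed edge of $T'$ coming from $e$, and replace it by the path $u_e\, z_e\, v_e$ in $G(\mathcal{H})$. The resulting subgraph $T$ of $G(\mathcal{H})$ is a subdivision of $T'$, hence a tree; it satisfies $S\subseteq V(T')\subseteq V(T)$, and by construction $d_T(z_e)=2$ for every $e\in\mathcal{E}(\mathcal{T})$. For the ``if'' direction, I would reverse this construction: starting from an $S$-Steiner tree $T$ in $G(\mathcal{H})$ satisfying the degree condition, note that for each $z_e\in V(T)$ both neighbors $u_e,v_e$ of $z_e$ in $T$ lie in $V(\mathcal{H})$ (since in $G(\mathcal{H})$ the neighbors of $z_e$ are exactly the vertices of $e$). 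I would suppress each such $z_e$, replacing its two incident edges by the single edge $u_e v_e$. The resulting graph $T'$ is, tautologically, a trimming of the subhypergraph $\mathcal{T}$ with $\mathcal{E}(\mathcal{T})=\{e:z_e\in V(T)\}$; and since $S\subseteq V(\mathcal{H})\cap V(T)=V(T')$, it suffices to check that $T'$ remains a tree.

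The only real thing to verify, and the main (minor) obstacle, is that the suppression step does not create parallel edges. Two remarks handle this. First, no two vertices of $V(\mathcal{H})$ are adjacent in $G(\mathcal{H})$, so the edge $u_ev_e$ does not already appear in $T$. Second, two distinct incidence vertices $z_e,z_{e'}\in V(T)$ cannot share the same pair of neighbors $\{u,v\}$ in $T$, as the paths $u\,z_e\,v$ and $u\,z_{e'}\,v$ would then form a cycle. Given these observations, suppression preserves the tree structure, and both directions of the equivalence follow.
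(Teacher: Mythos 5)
Your proposal is correct and follows essentially the same route as the paper: subdividing each trimmed edge through its incidence vertex $z_e$ for one direction, and suppressing (the paper phrases it as contracting) the degree-two vertices $z_e$ for the other. Your explicit check that suppression creates no parallel edges is a detail the paper leaves implicit (it follows at once from $T$ being a tree), but the argument is the same.
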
 
\begin{proof}
First suppose that $\mathcal{H}$ contains an $S$-Steiner hypertree $\mathcal{T}$ that can be trimmed to an $S$-Steiner tree $T$. Let $T'$ be obtained from $T$ by subdividing every edge $\tilde{e}\in E(T)$, creating the vertex $z_e$, where $e \in \mathcal{E}(\mathcal{H})$ is the edge from which $\tilde{e}$ is obtained by trimming. Observe that $T'$ is a subgraph of $G(\mathcal{H})$. By construction, we have $S \subseteq V(T)\subseteq V(T')$ and $d_{T'}(z_e)=2$ for all $e \in \mathcal{E}(\mathcal{H})$ with $z_e \in V(T)$. Finally, as $T$ is a tree, so is $T'$.

Now suppose that $G(\mathcal{H})$ contains an $S$-Steiner tree $T'$ that satisfies $d_{T'}(z_e)=2$ for all $e \in \mathcal{E}(\mathcal{H})$ with $z_e \in V(T')$. Let $T$ be the graph with $V(T)=V(T')\cap V(\mathcal{H})$ and which contains an edge $\tilde{e}=uv$ for all $u,v \in V(T)$ for which there is some $e \in \mathcal{E}(\mathcal{H})$ with $uz_e,vz_e \in E(T')$. Observe that $\tilde{e}$ can be obtained from $e$ by trimming and hence $T$ can be obtained from a subhypergraph $\mathcal{T}$ of $\mathcal{H}$ by trimming. As $T$ is obtained from $T'$ by contracting edges, we obtain that $T$ is a tree and by construction, we have $S \subseteq V(T')\cap V(\mathcal{H})=V(T)$. Hence $T$ is an $S$-Steiner tree and so $\mathcal{T}$ is an $S$-Steiner hypertree. 
\end{proof}

We next show that it suffices to deal with small $S$-Steiner trees instead of arbitrary ones which is important to limit the number of possible choices.

\begin{Lemma}\label{kurz2}
Let $\mathcal{H}$ be a hypergraph and $S \subseteq V(\mathcal{H})$. Then $G(\mathcal{H})$ contains an $S$-Steiner tree $T$ that satisfies $d_T(z_e)=2$ for all $e \in \mathcal{E}(\mathcal{H})$ with $z_e \in E(T)$ if and only if $G(\mathcal{H})$ contains a small $S$-Steiner tree as a special subdivision.
\end{Lemma}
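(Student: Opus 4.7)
The proof is by a pair of ``tree surgery'' arguments, one in each direction, both exploiting the bipartite structure of $G(\mathcal{H})$.

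For the direction ($\Leftarrow$), suppose we are given a small $S$-Steiner tree $T_0$ together with a special subdivision $(\phi,\mathcal{P})$ in $G(\mathcal{H})$. I would form $\tilde{T}:=\bigcup_{e\in E(T_0)} P_e$. Since every endpoint of every $P_e$ lies in $V(\mathcal{H})$ (by the special property), every $z_e\in V(\tilde T)$ must appear as an internal vertex of exactly one path in $\mathcal{P}$ (by internal vertex-disjointness), so $d_{\tilde T}(z_e)=2$. The subgraph $\tilde T$ is connected and contains $S$, as the image of any path in $T_0$ gives a walk connecting the relevant $\phi$-images. I would then take a spanning tree of $\tilde T$ and iteratively prune every leaf not in $S$, yielding an $S$-Steiner tree $T\subseteq G(\mathcal{H})$. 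No $z_e$ survives as a leaf, so $d_T(z_e)\geq 2$, while $d_T(z_e)\leq d_{\tilde T}(z_e)=2$, giving the desired equality.

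For the direction ($\Rightarrow$), start with an $S$-Steiner tree $T\subseteq G(\mathcal{H})$ with $d_T(z_e)=2$ for every $z_e\in V(T)$. The first step is to iteratively prune any leaf not in $S$; the bipartite structure of $G(\mathcal{H})$ ensures the cascade is clean, and any $z_e$ that survives in $T$ still has both its original $V(\mathcal{H})$-neighbours, hence still has degree $2$. After pruning, all leaves of $T$ lie in $S$. I would then set $V':=S\cup\{v\in V(T):d_T(v)\geq 3\}$, which is contained in $V(\mathcal{H})$ because no $z_e$ has degree $\geq 3$ in $T$. Topologically suppressing every degree-$2$ vertex of $T$ outside $V'$ produces a tree $T_0$ on vertex set $V'$ whose edges are in bijection with the maximal paths of $T$ between $V'$-vertices that avoid $V'$ internally. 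Taking $\phi$ to be the inclusion $V'\hookrightarrow V(\mathcal{H})$ and $\mathcal{P}$ to be the family of these paths directly gives a special subdivision of $T_0$ in $G(\mathcal{H})$: internal vertex-disjointness is inherited from the tree structure of $T$, and the conditions $\phi(s)=s$ for $s\in S$ and $\phi(v)\in V(\mathcal{H})$ are immediate.

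The main obstacle is showing that $T_0$ is small, i.e.\ $|V(T_0)|\leq 2|S|-2$. By Proposition \ref{deg3} the number of degree-$\geq 3$ vertices of $T$ is at most the number of leaves of $T$ minus $2$; since all leaves of $T$ now lie in $S$, this is at most $|S|-2$. Hence $|V(T_0)|=|V'|\leq |S|+(|S|-2)=2|S|-2$, as required. The delicate bookkeeping is in the initial leaf-pruning: one must verify that repeatedly deleting non-terminal leaves never leaves behind a $z_e$ of wrong degree, which works precisely because every $z_e$ that becomes a leaf is itself deletable (never a terminal) and each deletion only affects its two $V(\mathcal{H})$-neighbours.
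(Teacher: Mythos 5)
Your proof is correct and follows essentially the same route as the paper: for ($\Leftarrow$) the union of the subdivision paths already yields the desired tree (your extra spanning-tree-and-prune step is harmless but not needed, since a subdivision of a tree is a tree), and for ($\Rightarrow$) the paper likewise prunes non-terminal leaves (by taking $T'$ minimal), suppresses degree-$2$ non-terminals, and bounds $|V(T_0)|$ via Proposition \ref{deg3} exactly as you do.
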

\begin{proof}
First suppose that $G(\mathcal{H})$ contains a small $S$-Steiner tree $T$ as a special subdivision $(\phi,\mathcal{P})$. Let $T'$ be the subgraph of $G(\mathcal{H})$ with $V(T')=\bigcup_{P \in \mathcal{P}}V(P)$ and $E(T')=\bigcup_{P \in \mathcal{P}}E(P)$. As the paths of $\mathcal{P}$ are pairwise internally vertex-disjoint, we obtain that $T'$ can be obtained from $T$ by subdividing edges several times. Hence $T'$ is a tree. Further, we have $S \subseteq V(T)\subseteq V(T')$. Finally, as $V(T)\subseteq V(\mathcal{H})$ and the paths in $\mathcal{P}$ are pairwise internally vertex-disjoint, we obtain that $d_{T'}(z_e)=2$ for all $e \in \mathcal{E}(\mathcal{H})$ with $z_e \in E(T')$. Hence $T'$ is an $S$-Steiner tree that satisfies $d_{T'}(z_e)=2$ for all $e \in \mathcal{E}(\mathcal{H})$ with $z_e \in E(T')$.

Now suppose that $G(\mathcal{H})$ contains an $S$-Steiner tree $T'$ that satisfies $d_{T'}(z_e)=2$ for all $e \in \mathcal{E}(\mathcal{H})$ with $z_e \in V(T')$. Choosing $T'$ minimum, we may suppose that every vertex of degree $1$ of $T'$ is contained in $S$. Let $T$ be obtained from $T'$ by 
 splitting off vertices of degree $2$ which are not contained in $S$.  Observe that, as $d_{T'}(z_e)=2$ for all $e \in \mathcal{E}(\mathcal{H})$ with $z_e \in V(T')$, we  have $V(T)\subseteq V(\mathcal{H})$. Further, by construction, we have $d_T(v)\geq 3$ for all $v \in V(T)-S$. By Proposition \ref{deg3}, we obtain $|V(T)|\leq 2|S|-2$. Hence $T$ is a small $S$-Steiner tree. In order to see that $G(\mathcal{H})$ contains $T$ as a special subdivision, let $\phi$ be the identity map on $V(T)$. Further, for every $e=uv \in E(T)$, let $P_e$ be the unique $uv$-path in $T'$ and let $\mathcal{P}=\{P_e:e \in E(T)\}$. As $T'$ is a tree, the $P_e$ are pairwise internally vertex-disjoint and hence $(\phi,\mathcal{P})$ is a special subdivision of $T$ in $G(\mathcal{H})$.
\end{proof}
 We now show that a special subdivision of a fixed small $S$-Steiner tree can be found efficiently.
\begin{Lemma}\label{trouver}
Let $S$ be a set with $|S|=k$, $T$ a small $S$-Steiner tree and $\mathcal{H}$ a hypergraph. Then there exist a function $f:\mathbb{Z}_{\geq 0}\rightarrow \mathbb{Z}_{\geq 0}$ and an algorithm that  tests whether $G(\mathcal{H})$ contains $T$ as a special subdivision and runs in $O(f(k)n^{k}m^2)$.
\end{Lemma}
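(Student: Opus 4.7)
The plan is to exhaustively guess the mapping $\phi$ on the Steiner (non-terminal) vertices of $T$ and, for each guess, invoke Lemma \ref{l4} to decide whether the associated collection of internally vertex-disjoint paths exists in $G(\mathcal{H})$.

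First, note that in any special subdivision $(\phi,\mathcal{P})$ of $T$ in $G(\mathcal{H})$, the map $\phi$ is forced to satisfy $\phi(s)=s$ for $s\in S$ and $\phi(v)\in V(\mathcal{H})$ for every $v\in V(T)$. Hence $\phi$ is determined once its values on $V(T)\setminus S$ are fixed. Since $T$ is small, $|V(T)|\leq 2k-2$, so $|V(T)\setminus S|\leq k-2$. Consequently, there are at most $n^{k-2}$ candidate mappings to enumerate.

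Second, for a fixed candidate $\phi$, the existence of a family $\mathcal{P}=\{P_e:e\in E(T)\}$ completing it to a special subdivision is equivalent to the existence of $|E(T)|=|V(T)|-1\leq 2k-3$ pairwise internally vertex-disjoint paths in $G(\mathcal{H})$ with prescribed endpoint pairs $(\phi(u),\phi(v))$ for each edge $uv\in E(T)$. If $\phi(u)=\phi(v)$ for some $uv\in E(T)$, no such path exists (paths have distinct endpoints by definition) and the candidate is discarded. Otherwise, this is precisely the setting of Lemma \ref{l4}, applied to the graph $G(\mathcal{H})$ which has $n+m$ vertices, with at most $2k-3$ pairs; it runs in $O(g(k)(n+m)^2)$ for a function $g$ depending only on $k$.

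Aggregating over all $n^{k-2}$ candidate mappings yields a total runtime of $O(n^{k-2}\, g(k)\,(n+m)^2)$. Using $(n+m)^2\leq (nm)^2$ whenever $n,m\geq 2$ (the cases $n\leq 1$ or $m\leq 1$ being trivial), this is bounded by $O(f(k)\,n^k m^2)$ for a suitable function $f$, matching the claimed bound. Correctness follows immediately: if $G(\mathcal{H})$ contains $T$ as a special subdivision, then the underlying $\phi$ is among the enumerated candidates, and Lemma \ref{l4} will find the corresponding paths; conversely, any positive response of the algorithm exhibits an explicit special subdivision.

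The only real obstacle is the runtime accounting; the structural content of the proof reduces to observing that smallness of $T$ bounds the number of Steiner vertices by $k-2$, after which Lemma \ref{l4} does the algorithmic heavy lifting, so no additional combinatorial argument is required.
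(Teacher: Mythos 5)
Your proposal is correct and follows essentially the same route as the paper: enumerate the at most $n^{k-2}$ possible maps $\phi$ (fixed on $S$, free on the at most $k-2$ Steiner vertices of the small tree $T$), and for each invoke the disjoint-paths algorithm of Lemma \ref{l4} on $G(\mathcal{H})$ with the $|E(T)|\leq 2k-3$ endpoint pairs, giving $O(n^{k-2}f'(k)(n+m)^2)=O(f(k)n^km^2)$. The only additions beyond the paper's argument are the explicit discarding of candidates with $\phi(u)=\phi(v)$ and the explicit justification of $(n+m)^2=O(n^2m^2)$, both of which are harmless refinements rather than deviations.
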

\begin{proof}
When trying to find a special subdivision $(\phi,\mathcal{P})$ of $T$ in $G(\mathcal{H})$, first observe that, as $T$ is small, there are at most $n^{|V(T)-S|}\leq n^{k-2}$ possibilities to choose $\phi$. We now fix some $\phi_0$. In order to test whether there exists a special subdivision $(\phi,\mathcal{P})$ of $T$ in $G(\mathcal{H})$ with $\phi=\phi_0$, it suffices to decide whether there exists a set of pairwise internally vertex-disjoint paths $\{P_e:e \in E(T)\}$ in $G(\mathcal{H})$ such that $P_e$ is a $uv$-path for every $e=uv \in E(T)$. By Lemma \ref{l4}, there is a function $f':\mathbb{Z}_{\geq 0}\rightarrow \mathbb{Z}_{\geq 0}$ that tests this property and runs in $O(f'(|E(T)|)(n+m)^2)$. We obtain a total running time of $O(n^{k-2}(f'(|E(T)|)(n+m)^2))=O(f'(2k)n^{k}m^2)$.
\end{proof}

We next prove that the number of small $S$-Steiner trees on a fixed ground set is bounded.

\begin{Lemma}\label{compter}
Let $S$ be a set with $|S|=k$  for some integer $k \geq 2$ and $X$ a set with $|X|=k-2$ and $X \cap S = \emptyset$. Then there are at most $(2k-2)^{2k-3}$ labelled  small Steiner trees $T$ with $V(T)\subseteq S \cup X$.
\end{Lemma}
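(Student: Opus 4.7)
The plan is to count directly the number of labelled trees $T$ with $S \subseteq V(T) \subseteq S \cup X$, observing first that any such $T$ is automatically small, since $|V(T)| \leq |S \cup X| = 2k-2$. Writing $V(T) = S \cup Y$ for some $Y \subseteq X$ of size $i$, Cayley's formula (Theorem~\ref{cayley}) yields $(k+i)^{k+i-2}$ labelled trees on $S \cup Y$, and there are $\binom{k-2}{i}$ choices for $Y$. Hence the quantity to bound is
\[ N = \sum_{i=0}^{k-2} \binom{k-2}{i}(k+i)^{k+i-2}. \]

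The task then reduces to proving $N \leq (2k-2)^{2k-3}$. My plan is to use the bound $(k+i)^{k+i-2} \leq (2k-2)^{k+i-2}$, valid because $k+i \leq 2k-2$, factor out $(2k-2)^{k-2}$, and apply the binomial theorem to obtain
\[ N \leq (2k-2)^{k-2} \sum_{i=0}^{k-2}\binom{k-2}{i}(2k-2)^i = (2k-2)^{k-2}(2k-1)^{k-2}. \]
It then suffices to establish $\bigl((2k-1)/(2k-2)\bigr)^{k-2} \leq 2k-2$, which I will derive from the standard estimate $(1+1/(2k-2))^{k-2} \leq \sqrt{e} < 2 \leq 2k-2$, handling the degenerate case $k=2$ separately (where the left-hand side equals $1$).

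The main obstacle I anticipate is choosing a bound that is tight enough. Replacing every term $(k+i)^{k+i-2}$ by its maximum value $(2k-2)^{2k-4}$ and summing naively would yield only $2^{k-2}(2k-2)^{2k-4}$, which fails to beat $(2k-2)^{2k-3}$ as soon as $2^{k-2} > 2k-2$, i.e., for $k \geq 6$. The refinement above keeps the dependence on $i$ inside the exponent so that the binomial identity $\sum_i \binom{k-2}{i}(2k-2)^i = (2k-1)^{k-2}$ can absorb the factor $2^{k-2}$ efficiently, giving the required inequality with room to spare.
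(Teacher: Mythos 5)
Your proof is correct, and it shares the paper's skeleton: decompose by the vertex set of the tree, apply Cayley's formula (Theorem \ref{cayley}) to each choice, and bound the resulting sum by $(2k-2)^{2k-3}$. The two arguments diverge only in how they close the estimate. The paper overcounts by summing over \emph{all} nonempty subsets of the ground set of size $n=2k-2$, obtaining $\sum_{\mu=1}^{n}\binom{n}{\mu}\mu^{\mu-2}$, and then bounds each summand individually via the pairing $\binom{n}{\mu}\leq n^{n-\mu}$ and $\mu^{\mu-2}\leq n^{\mu-2}$, so that every term is at most $n^{n-2}$ and the $n$ terms together give $n^{n-1}=(2k-2)^{2k-3}$. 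You instead count exactly the subsets containing $S$, absorb the binomial coefficients with the binomial theorem, and finish with the elementary estimate $\bigl(1+1/(2k-2)\bigr)^{k-2}\leq\sqrt{e}<2\leq 2k-2$ (with $k=2$ checked separately). Both routes are sound: the paper's exponent-pairing trick avoids any final exponential estimate and is arguably slicker, while your version keeps the count closer to the actual quantity being bounded and, as you correctly observe, sidesteps the naive term-by-term bound $2^{k-2}(2k-2)^{2k-4}$, which would indeed fail for $k\geq 6$.
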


\begin{proof}
By Theorem \ref{cayley}, the lemma follows from the fact that the number of labelled trees on at most $n$ vertices is at most  $\sum_{\mu=1}^n{n\choose\mu}\mu^{\mu-2}<\sum_{\mu=1}^n n^{n-\mu}\cdot n^{\mu-2}=\sum_{\mu=1}^n n^{n-2}=n^{n-1}.$
\end{proof}

We are now ready to prove Theorem \ref{sfix}.
\begin{proof}(of Theorem \ref{sfix})
Let $\mathcal{H}$ be a hypergraph and $S \subseteq V(\mathcal{H})$ with $|S|=k$. We need to decide in $O(f(k)n^{k}m^2)$ time whether $\mathcal{H}$ contains an $S$-Steiner hypertree.
By Lemmas \ref{kurz1} and \ref{kurz2}, it suffices to show that we can decide in $O(f(k)n^{k}m^2)$ time whether $G(\mathcal{H})$ contains a small $S$-Steiner tree as a special subdivision. In order to test whether this is the case, we can restrict ourselves to trees $T$ that satisfy $V(T)\subseteq S \cup X$ for some fixed $X$ with $|X|=k-2$. By Lemma \ref{compter}, there is a function $f_1:\mathbb{Z}_{\geq 0}\rightarrow \mathbb{Z}_{\geq 0}$ such that there are at most $f_1(k)$ such trees. Next, by Lemma \ref{trouver}, there is a function $f_2:\mathbb{Z}_{\geq 0}\rightarrow \mathbb{Z}_{\geq 0}$ such that, for each of these trees $T$, we can decide in $O(f_2(k)n^{k}m^2)$ time if $G(\mathcal{H})$ contains $T$ as a special subdivision. We obtain a total running time of $O(f_1(k)f_2(k)n^{k}m^2)$.
\end{proof}
\subsection{Steiner rooted-connected orientations}

This section is dedicated to proving Theorems \ref{roothard} and \ref{sfix2}. The following is the key ingredient. 
 Its graphic version is trivial and easily implies the hypergraphic one.

\begin{Lemma}\label{equi}
Let $\mathcal{H}$ be a hypergraph, $r \in V(\mathcal{H})$ and $S \subseteq V(\mathcal{H})$. Then $\mathcal{H}$ has a $(r,S)$-Steiner rooted connected orientation if and only if $\mathcal{H}$ contains a $(S \cup r)$-Steiner hypertree.
\end{Lemma}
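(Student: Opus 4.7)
The plan is to prove both directions directly, invoking Proposition \ref{dirtrim} for the ``only if'' direction and giving an explicit construction for the ``if'' direction. Both arguments rely on the observation that trimming a hypergraph and directed-trimming a dypergraph are essentially the same operation up to the choice of orientation on the resulting edges, so the lemma mostly amounts to bookkeeping.

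For the ``if'' direction, I would start with a subhypergraph $\mathcal{T}$ of $\mathcal{H}$ that trims to an $(S\cup r)$-Steiner tree $T$. I would orient $T$ as an $r$-arborescence by directing every edge away from $r$, and use this to define an orientation of $\mathcal{T}$: if $e\in\mathcal{E}(\mathcal{T})$ is trimmed to the edge $uv$ of $T$ and $T$ is oriented from $u$ to $v$, I set $head(e)=v$. All remaining hyperedges of $\mathcal{H}$ are oriented arbitrarily. Since $u\in e\setminus\{v\}$, we have $u\in tail(e)$, so each arc $uv$ of the arborescence corresponds to a dyperedge of $\vec{\mathcal{H}}$ with $u$ in its tail and $v$ as head. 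Consequently, the unique $rs$-path in the arborescence, for any $s\in S$, lifts to a directed walk from $r$ to $s$ in $\vec{\mathcal{H}}$, witnessing that $s$ is reachable from $r$.

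For the ``only if'' direction, I would start with an $(r,S)$-Steiner rooted connected orientation $\vec{\mathcal{H}}$. Since every vertex of $S\cup r$ is reachable from $r$ in $\vec{\mathcal{H}}$ (trivially for $r$ itself), Proposition \ref{dirtrim} applied with terminal set $S\cup r$ yields a subdypergraph $\vec{\mathcal{T}}$ of $\vec{\mathcal{H}}$ that can be transformed into an $(r,S\cup r)$-Steiner arborescence $B$ by a directed trimming. The underlying graph of $B$ is a tree whose vertex set contains $S\cup r$, hence an $(S\cup r)$-Steiner tree $T$. The underlying hypergraph $\mathcal{T}$ of $\vec{\mathcal{T}}$ is then a subhypergraph of $\mathcal{H}$, and the directed trimming of $\vec{\mathcal{T}}$ to $B$ translates directly into a trimming of $\mathcal{T}$ to $T$: each dyperedge $a$ of $\vec{\mathcal{T}}$, which underlies the hyperedge $tail(a)\cup head(a)$ of $\mathcal{T}$, is directed-trimmed to an arc $uhead(a)$ with $u\in tail(a)$, whose underlying edge has both endpoints in this hyperedge.

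The main obstacle is essentially only notational: carefully matching each hyperedge of the trimmed hypergraph with its oriented counterpart, and each edge of the tree with its corresponding arc of the arborescence. Once these correspondences are set up, the equivalence of reachability in the orientation and existence of a spanning $(S\cup r)$-tree in the trimming is immediate. No new combinatorial argument beyond Proposition \ref{dirtrim} is required.
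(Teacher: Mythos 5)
Your proposal is correct and follows essentially the same route as the paper: both directions rest on the correspondence between trimmings of a hypertree and directed trimmings of its orientation, with Proposition \ref{dirtrim} supplying the link between reachability and Steiner arborescences. The only cosmetic difference is that in the ``if'' direction you verify reachability by lifting the $rs$-paths of the arborescence directly, whereas the paper invokes Proposition \ref{dirtrim} once more; both are fine.
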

\begin{proof}
First suppose that $\mathcal{H}$ contains an $(S \cup r)$-Steiner hypertree $\mathcal{T}$ that can be trimmed to an $(S \cup r)$-Steiner tree $T$. Then there is an orientation $\vec{T}$ of $T$ that is an $(r,S)$-Steiner arborescence. Now consider the orientation $\vec{\mathcal{H}}$ in which for every hyperedge in $\mathcal{E}(\mathcal{T})$ we choose as its head the head of the corresponding arc in $\vec{T}$ and assign an arbitrary orientation to all remaining hyperedges. Let $\vec{\mathcal{T}}$ be the orientation of $\mathcal{T}$ obtained from $\vec{\mathcal{H}}$ by a restriction to the dyperedges whose corresponding hyperedges are contained in $\mathcal{E}(\mathcal{T})$. Then $\vec{\mathcal{T}}$ can be transformed into $\vec{T}$ by a directed trimming. As $\vec{\mathcal{T}}$ is a subdypergraph of $\vec{\mathcal{H}}$ , we obtain by Proposition \ref{dirtrim} that all vertices in $S$ are reachable from $r$ in $\vec{\mathcal{H}}$.

Now suppose that $\mathcal{H}$ has an $(r,S)$-Steiner rooted connected orientation $\vec{\mathcal{H}}$. By Proposition \ref{dirtrim}, $\vec{\mathcal{H}}$ contains a subdypergraph $\vec{\mathcal{T}}$ that can be transformed into an $(r,S)$-Steiner arborescence $\vec{T}$ by a directed trimming. Let $\mathcal{T}$ be the underlying hypergraph of $\vec{\mathcal{T}}$ and $T$ the underlying graph of $\vec{T}$. Then $T$ is an $(S \cup r)$-Steiner tree, $T$ can be obtained from $\mathcal{T}$ by trimming and $\mathcal{T}$ is a subhypergraph of $\mathcal{H}$. This finishes the proof.
\end{proof}

Lemma \ref{equi} and Theorem \ref{treehard} imply Theorem \ref{roothard}. Further, Lemma \ref{equi} and Theorem \ref{sfix} imply Theorem \ref{sfix2}.

\subsection{Steiner strongly connected orientations}

We here conclude Theorem \ref{stronghard} from Theorem \ref{roothard}.
\begin{proof}(of Theorem \ref{stronghard})
Again, the problem clearly is in NP. We prove the hardness by a reduction from SRCOH. Let $(\mathcal{H},r,S)$ be an instance of SRCOH. Let $\mathcal{H}'$ be obtained from $\mathcal{H}$ by adding the hyperedge $e^*=S \cup r$ and let $S'=S\cup r$. We will prove that $(\mathcal{H}',S')$ is a positive instance of SSCOH if and only if  $(\mathcal{H},r,S)$ is a positive instance of SRCOH.

First suppose that $(\mathcal{H},r,S)$ is a positive instance of SRCOH, so there is an orientation $\vec{\mathcal{H}}$ of $\mathcal{H}$ in which all vertices of $S$ are reachable from $r$. Let the orientation $\vec{\mathcal{H}'}$ of $\mathcal{H}'$ be obtained by choosing $r$ as the head of $\vec{e^*}$ and giving all other hyperedges the orientation they have in $\vec{\mathcal{H}}$. As $\vec{\mathcal{H}}$ is a subdypergraph of $\vec{\mathcal{H}'}$, we obtain that all vertices in $S$ are reachable from $r$ in $\vec{\mathcal{H}'}$. Further, due to the orientation of $e^*$, $r$ is also reachable from all vertices in $S$ in $\vec{\mathcal{H}'}$. Hence $\vec{\mathcal{H}'}$ is strongly connected in $S'$.

Now suppose that $(\mathcal{H}',S')$ is a positive instance of SSCOH, so there is an orientation $\vec{\mathcal{H}'}$ of $\mathcal{H}'$ which is strongly connected in $S'$. By Proposition \ref{propch}, we may suppose that $r$ is the head of $\vec{e^*}$ in $\vec{\mathcal{H}'}$. Now let $\vec{\mathcal{H}}$ be the orientation of $\mathcal{H}$ which is obtained from $\vec{\mathcal{H}'}$ by deleting the dyperedge corresponding to $e^*$. Consider some $s \in S$. As $\vec{\mathcal{H}'}$ is strongly connected in $S$, there is a subdypergraph $\vec{\mathcal{T}}$ of $\vec{\mathcal{H}'}$ that can be trimmed to a directed $rs$-path. Clearly, this path does not contain an arc entering $r$ and hence $\vec{\mathcal{T}}$ does not contain the dyperedge corresponding to $e^*$. It follows that $\vec{\mathcal{T}}$ is also a subdypergraph of $\vec{\mathcal{H}}$. As $s$ was chosen arbitrarily, we obtain that all vertices in $S$ are reachable from $r$ in $\vec{\mathcal{H}}$.

As the size of $(\mathcal{H}',S')$ is clearly polynomial in the size of $(\mathcal{H},r,S)$ and by Theorem \ref{roothard}, the statement follows.

\end{proof}

\subsection{Well-balanced orientation}
\begin{proof}(of Theorem \ref{wellhard}) 
Again, the problem clearly is in NP. We prove the hardness by a reduction from $(3,B2)$-SAT. Let $(X,\mathcal{C})$ be an instance of $(3,B2)$-SAT. We now create an instance $\mathcal{H}$ of WBOH. For every literal $\ell$ over $X$ that is contained in two clauses $C_1,C_2 \in \mathcal{C}$, we let $V(\mathcal{H})$ contain a set $W_\ell=\{v_\ell,w_\ell,v_\ell^{C_1},v_\ell^{C_2},w_\ell^{C_1},w_\ell^{C_2}\}$ of vertices and we add edges to $\mathcal{E}(\mathcal{H})$ so that $v_\ell v_\ell^{C_1}w_\ell^{C_1}w_\ell w_\ell^{C_2} v_\ell^{C_2}v_\ell$ becomes a cycle. Next for every $C \in \mathcal{C}$ that contains 3 literals $\ell_1,\ell_2,\ell_3$, we let $V(\mathcal{H})$ contain a vertex $z_C$ and we let $\mathcal{E}(\mathcal{H})$ contain the edges $v_{\ell_i}^Cz_C$ for $i=1,2,3$ and the hyperedge $\{w_{\ell_1}^C,w_{\ell_2}^C,w_{\ell_3}^C,z_C\}$. We now let $W$ denote $\bigcup_{x \in X}W_x \cup W_{\bar{x}}$ and we let $Z$ denote $\bigcup_{C \in \mathcal{C}}z_C$. Finally, we let $V(\mathcal{H})$ contain one more vertex $a$ and we let $\mathcal{E}(\mathcal{H})$ contain an edge $av_\ell$ for all literals $\ell$ over $X$, a hyperedge $\{a,w_x,w_{\bar{x}}\}$ for all $x \in X$ and 4 copies $e_1^*,\ldots,e_4^*$ of the hyperedge $Z \cup a$. This finishes the description of $\mathcal{H}$. For an illustration, see Figure \ref{figw1}.

\begin{figure}[h]
    \centering
        \includegraphics[width=\textwidth]{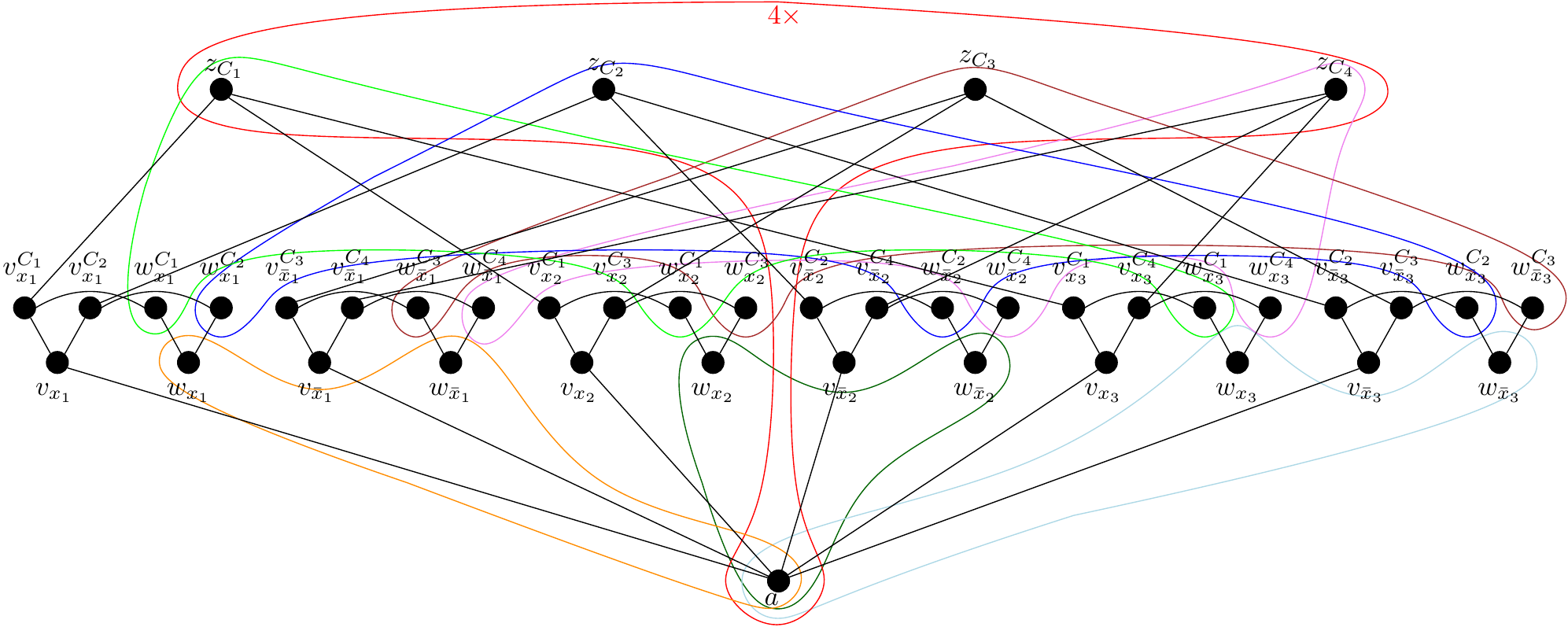}
        \caption{An example of the construction of $\mathcal{H}$ for a $(3,B2)$-formula with $X=\{x_1,x_2,x_3\}$ and $\mathcal{C}=\{C_1=\{x_1,x_2,x_3\},C_2=\{x_1,\bar{x}_2,\bar{x}_3\},C_3=\{\bar{x}_1,x_2,\bar{x}_3\},C_4=\{\bar{x}_1,\bar{x}_2,{x}_3\}\}$.}\label{figw1}
\end{figure}

We now show that $\mathcal{H}$ is a positive instance of WBOH if and only if $(X,\mathcal{C})$ is a positive instance of $(3,B2)$-SAT. First suppose that $\mathcal{H}$ is a positive instance of WBOH.   Before giving the main proof, we need the following auxiliary result.
\begin{Claim}\label{acht}
For every $C \in \mathcal{C}$, we have $\lambda_{\mathcal{H}}(a,z_C)=8$.
\end{Claim}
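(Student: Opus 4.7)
The plan is to establish $\lambda_{\mathcal{H}}(a, z_C) = 8$ by matching upper and lower bounds on the hypergraph cut function. For the upper bound I would take $X = \{z_C\}$ and inspect the construction to list the hyperedges of $\mathcal{H}$ containing $z_C$: the four copies $e_1^*, \ldots, e_4^*$ of $Z \cup a$, the three edges $v_{\ell_i}^C z_C$ for the literals $\ell_1, \ell_2, \ell_3$ of $C$, and the single hyperedge $h_C = \{w_{\ell_1}^C, w_{\ell_2}^C, w_{\ell_3}^C, z_C\}$. This yields $d_{\mathcal{H}}(z_C) = 8$, hence $\lambda_{\mathcal{H}}(a, z_C) \leq 8$.

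For the lower bound I would fix an arbitrary $X \subseteq V(\mathcal{H})$ with $z_C \in X$ and $a \notin X$ and exhibit eight distinct hyperedges that must cross $X$. First, the four copies $e_1^*, \ldots, e_4^*$ trivially cross $X$ since they each contain both $a$ and $z_C$. To obtain four further crossings I would introduce four walks from $a$ to $z_C$ in $\mathcal{H}$ that avoid these copies: for $i = 1, 2, 3$, the walk $Q_i$ follows the vertex sequence $a, v_{\ell_i}, v_{\ell_i}^C, z_C$ using the hyperedges $av_{\ell_i}$, $v_{\ell_i} v_{\ell_i}^C$, $v_{\ell_i}^C z_C$, and the walk $Q_4$ follows the sequence $a, w_{\ell_1}, w_{\ell_1}^C, z_C$ using the hyperedges $\{a, w_{\ell_1}, w_{\bar{\ell}_1}\}$, $w_{\ell_1} w_{\ell_1}^C$, $h_C$. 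A direct inspection of vertex sets confirms that the twelve hyperedges appearing in these four walks together with the four copies $e_1^*, \ldots, e_4^*$ are pairwise distinct.

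To conclude, I would observe that along each walk $Q_i$ the starting vertex $a$ lies outside $X$ while the final vertex $z_C$ lies in $X$, so two consecutive vertices of $Q_i$ must lie on opposite sides of $X$; the hyperedge of $Q_i$ containing both of them then meets both $X$ and $V(\mathcal{H}) \setminus X$, hence crosses $X$. Each of the four walks therefore contributes at least one distinct crossing hyperedge, giving $d_{\mathcal{H}}(X) \geq 4 + 4 = 8$ and thus $\lambda_{\mathcal{H}}(a, z_C) \geq 8$.

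I do not expect any serious obstacle. The only subtle step is the design of $Q_4$, which must pass through $h_C$ and be hyperedge-disjoint from $Q_1, Q_2, Q_3$; this is precisely what the vertices $w_{\ell_1}, w_{\ell_1}^C$, the cycle edge $w_{\ell_1} w_{\ell_1}^C$, and the hyperedge $\{a, w_{\ell_1}, w_{\bar{\ell}_1}\}$ of the construction allow. Once these routes are identified, the remaining verification is straightforward bookkeeping.
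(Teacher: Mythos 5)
Your proposal is correct and follows essentially the same argument as the paper: the upper bound via $d_{\mathcal{H}}(z_C)=8$, and the lower bound via eight pairwise hyperedge-disjoint $a$--$z_C$ hyperpaths (the four copies $e_i^*$, the three routes $a\,v_{\ell_i}\,v_{\ell_i}^C\,z_C$, and the route through $w_{\ell_1}$, $w_{\ell_1}^C$ and the hyperedge $\{w_{\ell_1}^C,w_{\ell_2}^C,w_{\ell_3}^C,z_C\}$), each of which must contribute a distinct crossing hyperedge to any cut separating $z_C$ from $a$.
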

\begin{proof}
Clearly, we have $\lambda_{\mathcal{H}}(a,z_C)\leq d_{\mathcal{H}}(z_C)=8$. Next observe that for $i=1,\ldots,4$, the hyperedge $e^*_i$ forms an $az_C$-hyperpath $P_i$. Next, let $\ell_1,\ell_2,\ell_3$ be the literals contained in $C$. For $i=1,2,3$, observe that $av_{\ell_i}v_{\ell_i}^Cz_C$ forms an $az_C$- hyperpath $P_{4+i}$. Finally, observe that there is an $az_C$- hyperpath $P_8$ that can be trimmed to the path $aw_{\ell_1}w_{\ell_1}^Cz_C$. As $P_1,\ldots,P_8$ is a collection of 8 hyperedge-disjoint hyperpaths, for every set $Y \subseteq V(\mathcal{H})$ with $a \in V(\mathcal{H})-Y$ and $z_C \in Y$ and every $i=1,\ldots,8$, there is a hyperedge in $\delta_{\mathcal{H}}(Y)\cap \mathcal{E}(P_i)$, so $d_\mathcal{H}(Y)\geq 8$. Hence the statement follows.
\end{proof}
Let $\vec{\mathcal{H}}$ be a well-balanced orientation of $\mathcal{H}$. By Claim \ref{acht} and as $\vec{\mathcal{H}}$ is well-balanced, we obtain $\lambda_{\vec{\mathcal{H}}}(z_C,a)\geq 4$ for all $C \in \mathcal{C}$. By applying Proposition \ref{propch} to $\vec{\mathcal{H}}-(\{e_1^*,\ldots,e^*_4\}-e^*_i)$, we may suppose that $e_i^*$ is oriented toward $a$ for $i=1,\ldots,4$.  Hence all the hyperedges containing a vertex in $Z$ and some vertices in $W$ are oriented toward the vertex in $Z$. We now define a truth assignment $\phi:X \rightarrow \{TRUE,FALSE\}$ in the following way: For every $x \in X$, we set $\phi(x)=TRUE$ if the hyperedge $\{a,w_x,w_{\bar{x}}\}$ is oriented toward $w_x$ and $\phi(x)=FALSE$ otherwise. In order to show that $\phi$ satisfies $(X,\mathcal{C})$, consider some $C \in \mathcal{C}$, let $\ell_1,\ell_2,\ell_3$ be the literals contained in $C$ and let $Y=z_C\cup \bigcup_{i=1}^3W_{\ell_i}$. As $\vec{\mathcal{H}}$ is well-balanced by Claim \ref{acht}, we have $d^-_{\vec{\mathcal{H}}}(Y)\geq \lambda_{\vec{\mathcal{H}}}(a,z_C)\geq 4$.  As  all the hyperedges containing a vertex in $Z$ and some vertices in $W$ are oriented toward the vertex in $Z$, there exists some $i \in \{1,2,3\}$ such that $\{a,w_{\ell_i},w_{\bar{\ell_i}}\}$ is oriented toward $w_{\ell_i}$, so $\phi(\ell_i)=TRUE$ by construction.

 Hence $\phi$ is a satisfying assignment for $(X,\mathcal{C})$ and so $(X,\mathcal{C})$ is a positive instance of $(3,B2)$-SAT.

Now suppose that $(X,\mathcal{C})$ is a positive instance of $(3,B_2)$-SAT, so there is a satisfying assignment $\phi$ for $(X,\mathcal{C})$. We now create an orientation $\vec{\mathcal{H}}$ of $\mathcal{H}$. First, we orient $e_i^*$ toward $a$ for $i=1,\ldots,4$. Next, we orient  all the hyperedges containing a vertex in $Z$ and some vertices in $W$ toward the vertex in $Z$. Now let $\ell$ be a literal over $X$ that is contained in two clauses $C_1,C_2$ whose order is chosen arbitrarily. We orient the edge $av_\ell$ from $a$ to $v_\ell$. Next, if $\phi(\ell)=TRUE$, for $i=1,2$, we orient $v_\ell v_\ell^{C_i}$ toward $v_\ell^{C_i}$, we orient $w_\ell w_\ell^{C_i}$ toward $w_\ell^{C_i}$ and we give $v_\ell^{C_i}w_\ell^{C_i}$ an arbitrary orientation. If $\phi(\ell)=FALSE$, we orient the edges in $\mathcal{H}[W_\ell]$ so that $v_\ell v_\ell^{C_1}w_\ell^{C_1}w_\ell w_\ell^{C_2} v_\ell^{C_2}v_\ell$ becomes a circuit. Finally, for every $x \in X$, we orient the hyperedge $\{a,w_x,w_{\bar{x}}\}$ toward $w_x$ if $\phi(x)=TRUE$ and toward $w_{\bar{x}}$ if $\phi(x)=FALSE$. This finishes the description of $\vec{\mathcal{H}}$. For an illustration, see Figure \ref{figw2}.

\begin{figure}[h]
    \centering
        \includegraphics[width=\textwidth]{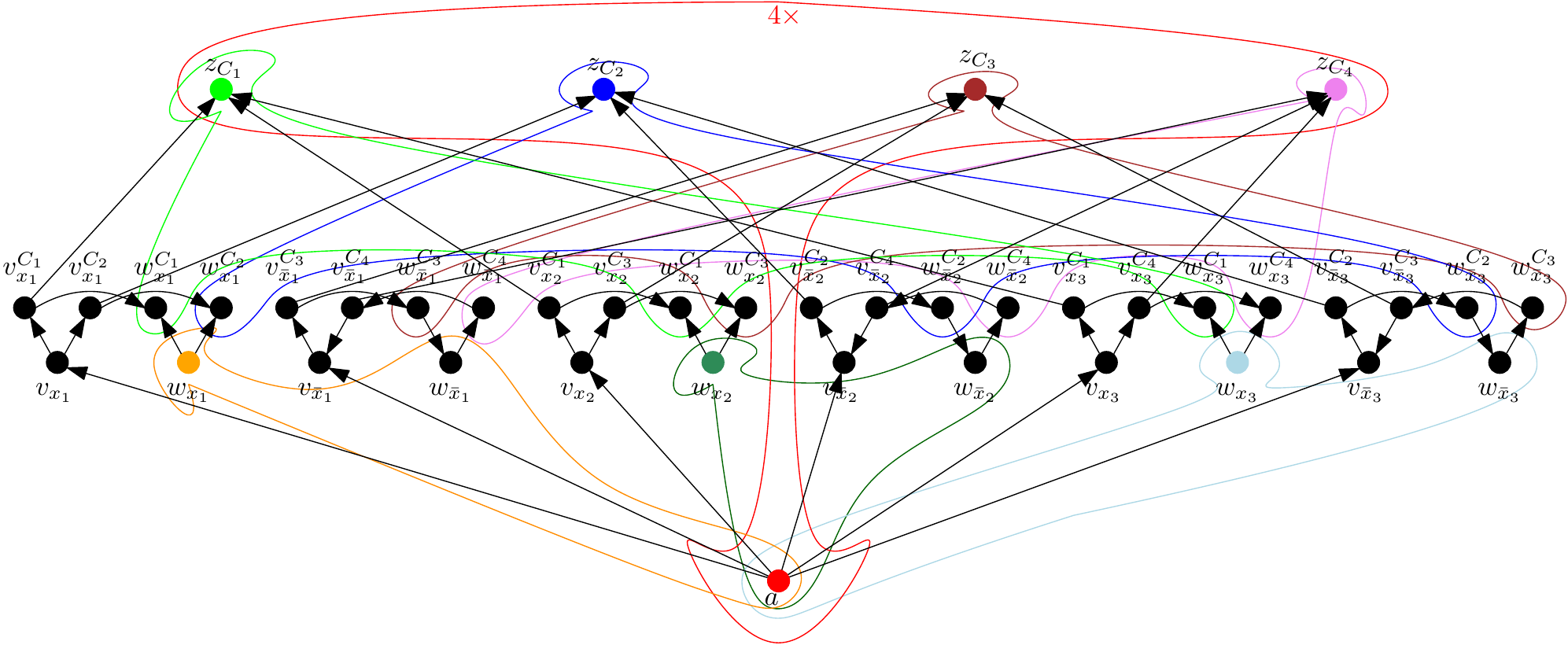}
        \caption{The construction of the orientation $\vec{\mathcal{H}}$ of $\mathcal{H}$ for the instance of Figure \ref{figw1} and the satisfying assignment $\phi$ such that $\phi(x_i)=TRUE$ for $i=1,2,3$. For every dyperedge containing at least three vertices, its head is marked in the color of the dyperedge.}\label{figw2}
\end{figure}

It remains to show that $\vec{\mathcal{H}}$ is well-balanced. First consider some $C \in \mathcal{C}$ and let $\ell_1,\ell_2,\ell_3$ be the literals contained in $C$. As $e_i^*$ is oriented toward $a$ for $i=1,\ldots,4$, we obtain that $\lambda_{\vec{\mathcal{H}}}(z_C,a)\geq 4 = \lfloor \frac{1}{2} d_{\mathcal{H}}(z_C)\rfloor$. Next, for $i=1,2,3$, observe that $\vec{\mathcal{H}}$ contains the directed hyperpath $Q_i=av_{\ell_i}v_{\ell_i}^Cz_C$ if $\phi(\ell_i)=TRUE$ and one of the directed hyperpaths $Q_i=av_{\ell_i}v_{\ell_i}^Cz_C$ and $Q_i=av_{\ell_i}v_{\ell_i}^{C'}w_{\ell_i}^{C'}w_{\ell_i}w_{\ell_i}^{C}v_{\ell_i}^{C}z_C$ where $C'$ is the unique clause distinct from $C$ in which $\ell_i$ is contained in if $\phi(\ell_i)=FALSE$. Finally, as $\phi$ satisfies $C$, there is some $i \in \{1,2,3\}$ such that $\phi(\ell_i)=TRUE$, so the hyperedge $\{a,w_\ell,w_{\bar{\ell}}\}$ is oriented toward $w_\ell$. Now $\vec{\mathcal{H}}$ contains a directed hyperpath that can be trimmed to $aw_{\ell_i}w_{\ell_i}^Cz_C$. As $Q_1,\ldots,Q_4$ are dyperedge-disjoint, we obtain $\lambda_{\vec{\mathcal{H}}}(a,z_C)\geq 4 = \lfloor \frac{1}{2} d_{\mathcal{H}}(z_C)\rfloor$. We next show that $\vec{\mathcal{H}}$ is strongly connected. By the above, we have that $\vec{\mathcal{H}}$ is strongly connected in $Z \cup a$. Let $\ell$ be a literal over $X$ that is contained in the clauses $C_1,C_2$. If $\phi(\ell)=FALSE$, then $\vec{\mathcal{H}}$ contains the directed hyperpath $av_{\ell}v_{\ell}^{C'}w_{\ell}^{C'}w_{\ell}w_{\ell}^{C}v_{\ell}^{C}z_C$ for some ordering $C,C'$ of $C_1,C_2$. If $\phi(\ell)=TRUE$, then $\vec{\mathcal{H}}$ contains directed hyperpaths that can be trimmed to $av_\ell v_\ell^{C_i}z_{C_i}$ and $aw_\ell w_\ell^{C_i}z_{C_i}$, respectively, for $i=1,2$. We obtain that $\vec{\mathcal{H}}$ is strongly connected. This yields $\min\{\lambda_{\vec{\mathcal{H}}}(a,w),\lambda_{\vec{\mathcal{H}}}(w,a)\}\geq 1 = \lfloor \frac{1}{2} d_{\mathcal{H}}(w)\rfloor$ for all $w \in W$. It follows that for any ordered pair $(s_1,s_2)$  in $V(\mathcal{H})$, we have $\lambda_{\vec{\mathcal{H}}}(s_1,s_2)\geq \min\{\lambda_{\vec{\mathcal{H}}}(s_1,a),\lambda_{\vec{\mathcal{H}}}(a,s_2)\}\geq \min\{\lfloor \frac{1}{2} d_{\mathcal{H}}(s_1)\rfloor,\lfloor \frac{1}{2} d_{\mathcal{H}}(s_2)\rfloor\}\geq \lfloor \frac{1}{2} \lambda_{\mathcal{H}}(s_1,s_2)\rfloor$. Hence $\vec{\mathcal{H}}$ is well-balanced, so $\mathcal{H}$ is a positive instance of $WBOH$.

As the size of $\mathcal{H}$ is clearly polynomial in the size of $(X,\mathcal{C})$ and by Theorem \ref{3b2satdure}, the statement follows.
\end{proof}
\section{Conclusion}

We have shown hardness results for the problem of finding Steiner hypertrees in a given hypergraph and three orientations problems in hypergraphs. We further  have shown that two of these problems become  easy when we fix the number of terminals. For these two problems, one may ask whether they are fixed parameter tractable when parameterized by the number of terminals. More concretely, we pose the following problem:

\begin{Problem}\label{prob1}
Is there an algorithm that solves SHT and runs in $O(f(|S|)n^{O(1)})$ for some computable function $f:\mathbb{Z}_{\geq 0}\rightarrow \mathbb{Z}_{\geq 0}$?
\end{Problem}

Observe that due to Lemma \ref{equi}, the analogous problem for SRCOH is equivalent to Problem \ref{prob1}.

For SSCOH even the question whether an analogous result to Theorems \ref{sfix} and \ref{sfix2} exists remains open.

\begin{Problem}
Is there a polynomial time algorithm that solves SSCOH when $|S|$ is fixed?
\end{Problem}

Finally, we could ask whether Theorem \ref{sfix} can be generalized to finding packings of hypertrees.

\begin{Problem}
Can we decide in polynomial time whether a given hypergraph $\mathcal{H}$ contains a packing of $q$ hyperedge-disjoint $S$-Steiner hypertrees when $|S|$ and q are fixed?
\end{Problem}

\end{document}